\documentclass[11pt,centertags]{amsart}

\usepackage{amscd}
\usepackage{easybmat}
\usepackage{mathrsfs}
\usepackage{amsfonts}
\usepackage{color}
\usepackage{pifont}
\usepackage{upgreek}
\usepackage{bm}
\usepackage{hyperref}
\usepackage{shorttoc}
\usepackage{amsmath,amstext,amsthm,a4,amssymb,amscd}
\usepackage[mathscr]{eucal}
\usepackage{mathrsfs}
\usepackage{epsf}
\textwidth 15.8cm
\topmargin 0.5cm
\oddsidemargin 0.1cm
\evensidemargin 0.1cm
\parskip 0.0cm

\numberwithin{equation}{section}

\def\p{\partial}
\def\o{\overline}
\def\b{\bar}
\def\mb{\mathbb}
\def\mc{\mathcal}

\newtheorem{thm}{Theorem}[section]
\newtheorem{lemma}[thm]{Lemma}
\newtheorem{prop}[thm]{Proposition}
\newtheorem{cor}[thm]{Corollary}
\theoremstyle{definition}
\newtheorem{rem}[thm]{Remark}

\newtheorem{problem}[thm]{Problem}

\theoremstyle{definition}
\newtheorem{defn}[thm]{Definition}
\newcommand{\comment}[1]{}

\usepackage{fancyhdr}

\cfoot{}

\newenvironment{aligns}{\equation\aligned}{\endaligned\endequation}

\begin{document}

\title{Complex Finsler vector bundles with positive Kobayashi curvature}

\author[Huitao Feng]{Huitao Feng$^1$}
\author[Kefeng Liu]{Kefeng Liu$^2$}
\author[Xueyuan Wan]{Xueyuan Wan}

\address{Huitao Feng: Chern Institute of Mathematics \& LPMC,
Nankai University, Tianjin, China}
\email{fht@nankai.edu.cn}

\address{Kefeng Liu:
Mathematical Sciences Research Center, Chongqing University of Science and Technology, Chongqing 400054, China; Department of Mathematics, University of California at Los Angeles, California 90095, USA}
\email{liu@math.ucla.edu}

\address{Xueyuan Wan: School of Mathematics, Korea Institute for Advanced Study, Seoul 02455, Republic of Korea}
\email{xwan@kias.re.kr}

\thanks{$^1$~Partially supported by NSFC(Grant No. 11221091, 11271062, 11571184, 11771070), the Fundamental Research Funds for the Central Universities and Nankai Zhide Foundation.}

\thanks{$^2$~Partially supported by NSF (Grant No. 1510216).}

\begin{abstract} In this short note, we prove that a complex Finsler vector bundle with positive Kobayashi curvature must be ample, which partially solves  a problem of S. Kobayashi posed in 1975. As applications, a strongly pseudoconvex complex Finsler manifold
 with positive Kobayashi curvature must be biholomorphic to the complex projective
space; we also show that all Schur polynomials are numerically positive for complex Finsler vector bundles with positive Kobayashi curvature.

 \end{abstract}
\maketitle
%\tableofcontents

%%%%%%%%%%%%%%%%%%%%%%%%%%%%%%%%%%%%%%%%%%%
\section*{Introduction}

 Let $\pi:E\to M$ be a holomorphic vector bundle over compact complex manifold $M$. In this paper, we always assume that $\text{rank}E=r$, $\dim M=n$. It is well-known that $E$ is ample in the sense of Hartshorne if and only if the hyperplane line bundle $\mc{O}_{P(E^*)}(1)$ is a positive line bundle over $$P(E^*)=(E^*\setminus\{0\})/\mb{C}^*$$ (see \cite[Proposition 3.2]{Hart}), i.e.  $\mc{O}_{P(E^*)}(1)$ admits a positive curvature metric.
Understanding the relations between the algebraic positivity and the geometric positivity is an important problem. When $E$ itself admits a Hermitian metric of Griffiths positive curvature (see e.g.  \cite[Definition 2.1]{Liu1}), then $E$ is an ample vector bundle. In \cite{Gri}, Griffiths conjectured that its converse also holds, namely there exists a Hermitian metric of Griffiths positive curvature on $E$ if $E$ is ample.
If $M$ is a curve, H. Umemura \cite{Umemura} and  Campana-Flenner \cite{CF} gave an affirmative answer to this conjecture. For the general case, B. Berndtsson \cite{Bern2} proved that $E\otimes \det E$ is Nakano positive, C. Mourougane and S. Takayama \cite{MT} proved that $S^kE\otimes \det E$ is Griffiths positive for any $k>0$.

In 1975, S. Kobayashi \cite{Ko1} obtained the following equivalent description on the ampleness in Finsler setting. The related notations will be introduced in Section \ref{sec1} in this paper. More precisely,
\begin{thm}[{Kobayashi \cite[Theorem 5.1]{Ko1}}]
	$E$ is ample if and only if there exists a strongly pseudoconvex complex Finsler metric on $E^*$ with negative Kobayashi curvature.
\end{thm}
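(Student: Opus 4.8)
The plan is to translate both sides of the claimed equivalence into a single positivity statement for the hyperplane line bundle $\mc{O}_{P(E^*)}(1)$ and then to invoke the characterization of ampleness recalled above. First I would set up the dictionary between complex Finsler metrics on $E^*$ and Hermitian metrics on the tautological bundles over $P(E^*)$. A strongly pseudoconvex Finsler metric $F$ on $E^*$ is a smooth function on $E^*\setminus\{0\}$ that is positive and $2$-homogeneous, $F(\lambda v)=|\lambda|^2F(v)$, and whose fiberwise complex Hessian $(\p^2 F/\p v^i\p\o{v}^j)$ is positive definite. Restricting $F$ to each tautological line gives a Hermitian metric on $\mc{O}_{P(E^*)}(-1)$, whose dual is a metric on $\mc{O}_{P(E^*)}(1)$; by the $2$-homogeneity of $F$ the form $\frac{\sqrt{-1}}{2\pi}\p\b\p\log F$ is invariant under the $\mb{C}^*$-action and descends to a representative of $c_1(\mc{O}_{P(E^*)}(1))$ on $P(E^*)$. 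Thus the whole problem becomes deciding when this form is positive.

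The core of the argument is to read off the sign of $\frac{\sqrt{-1}}{2\pi}\p\b\p\log F$ from the two Finsler hypotheses. Here I would use the Chern--Finsler (nonlinear) connection associated with $F$ to split the holomorphic tangent bundle of $P(E^*)$ as a direct sum $\mc{V}\oplus\mc{H}$ of vertical and horizontal subbundles, the horizontal distribution being chosen precisely so that $\p\b\p\log F$ carries no mixed $\mc{V}$--$\mc{H}$ terms. With respect to this splitting the curvature becomes block diagonal, and I expect the computation to yield two clean identifications: the $\mc{V}$-block is positive exactly when $F$ is strongly pseudoconvex (this is the fiberwise, Fubini--Study type contribution, equivalent to positive definiteness of the fiber Hessian after correcting for the radial Euler direction), while the $\mc{H}$-block is positive exactly when $F$ has negative Kobayashi curvature, since the Kobayashi curvature introduced in Section \ref{sec1} is the horizontal part of the curvature of the Finsler structure on $E^*$. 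Consequently $\frac{\sqrt{-1}}{2\pi}\p\b\p\log F>0$ on all of $P(E^*)$ if and only if $F$ is simultaneously strongly pseudoconvex and of \emph{negative} Kobayashi curvature.

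Granting this equivalence the theorem follows in both directions. For the forward implication, if such an $F$ exists then $\mc{O}_{P(E^*)}(1)$ admits the positive metric dual to $F$, so $E$ is ample by the cited Proposition 3.2 of Hartshorne. For the converse, ampleness gives a positive metric on $\mc{O}_{P(E^*)}(1)$; its reciprocal, extended $2$-homogeneously along the fibers, is a Finsler metric $F$ on $E^*$, and the block-diagonal form of its curvature shows at once that $F$ is strongly pseudoconvex with negative Kobayashi curvature. I expect the main obstacle to be the curvature computation underlying the block decomposition, namely verifying that with the Chern--Finsler horizontal distribution the mixed terms of $\p\b\p\log F$ vanish and that the surviving horizontal block is exactly the Kobayashi curvature tensor. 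This is the step where the nonlinearity of the Finsler metric enters: unlike the Hermitian case one cannot diagonalize once and for all, and the identification of the vertical block with strong pseudoconvexity requires separating the radial direction, along which $\log F$ is affine, from the genuine $\mb{P}^{r-1}$-directions. Once these local identities are in place, the equivalence---and hence Kobayashi's theorem---drops out.
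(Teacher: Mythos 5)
The paper does not actually prove this statement: it is quoted as Theorem 5.1 of Kobayashi's 1975 paper \cite{Ko1}, so there is no internal proof to compare against. Your proposal is essentially a reconstruction of Kobayashi's original argument, and it is the right route; it is also exactly the machinery the paper recalls in Section \ref{sec1}, namely the identification of a Finsler metric on $E^*$ with a metric on $\mc{O}_{P(E^*)}(-1)$ and the block decomposition $\sqrt{-1}\p\b{\p}\log G=-\Psi+\omega_{FS}$ of Propositions \ref{prop1} and \ref{prop2}, which is precisely your claim that the curvature has no mixed horizontal--vertical terms, that the vertical block detects strong pseudoconvexity, and that the horizontal block is minus the Kobayashi curvature. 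Two points deserve care. First, in the converse direction the phrase ``the block-diagonal form of its curvature shows at once that $F$ is strongly pseudoconvex'' is slightly circular: the horizontal distribution, and hence the block decomposition, is defined using the inverse $G^{\b{j}k}$ of the fiberwise Hessian, so you must first deduce strong pseudoconvexity from the positivity of $\sqrt{-1}\p\b{\p}\log F$ restricted to the fibers of $p_1:P(E^*)\to M$, where no splitting is needed (this is the content of the proof of Proposition \ref{prop2}(iii), which separates out the radial direction), and only then invoke the splitting to read off negative Kobayashi curvature from horizontal positivity. Second, $\log F$ is not affine along the radial direction; what homogeneity actually gives, and what you need for the form to descend to $P(E^*)$, is that the radial direction lies in the kernel of $\p\b{\p}\log F$. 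With these adjustments your argument is complete and matches the cited source.
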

Furthermore, in \cite[Section 5, Page 162]{Ko1}, S. Kobayashi posed the following problem:
\begin{problem}
It is reasonable to expect that $E$ is ample if and only if it admits a complex Finsler structure of positive curvature. The question is whether $E$ admits a complex Finsler structure of positive curvature if and only if $E^*$ admits a complex Finsler structure of negative curvature.
\end{problem}
In this paper, we partially solve this problem affirmatively and obtain
\begin{thm}\label{thm0}
	Let $\pi:E\to M$ be a holomorphic vector bundle over the compact complex manifold $M$. If $E$ admits a strongly pseudoconvex complex Finsler metric with positive Kobayashi curvature, then $E$ is ample.
\end{thm}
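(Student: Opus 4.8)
The strategy is to reduce Theorem \ref{thm0} to Kobayashi's characterization (Theorem 5.1 quoted above) by manufacturing, out of the given positively curved Finsler metric on $E$, a strongly pseudoconvex Finsler metric of \emph{negative} Kobayashi curvature on the dual bundle $E^*$. This is the ``easy half'' of Kobayashi's duality question, and it is the Finsler analogue of the elementary fact that a Hermitian metric $h$ on $E$ of Griffiths positive curvature induces on $E^*$ the dual metric $h^*$ whose curvature is the negative of that of $h$, hence Griffiths negative. Concretely, let $G:E\to[0,\infty)$ be the given strongly pseudoconvex complex Finsler metric (the square of the norm), so that in local fibre coordinates $(z,v)$ one has $G(z,\lambda v)=|\lambda|^2G(z,v)$ and the Levi matrix $G_{i\b j}=\p^2 G/\p v^i\p\b v^j$ is positive definite on $E\setminus\{0\}$. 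The plan is to define the dual Finsler metric $G^*$ on $E^*$ by the fibrewise polar (Legendre) transform
\[
G^*(z,\xi)=\sup_{v\neq 0}\frac{|\langle\xi,v\rangle|^2}{G(z,v)},\qquad \xi\in E^*_z .
\]

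First I would record that $G^*$ is again a complex Finsler metric: it is manifestly nonnegative, homogeneous of the correct degree, and smooth away from the zero section. The key analytic input here is that, because $G$ is \emph{strongly} pseudoconvex, its fibrewise indicatrix is a strongly convex hypersurface, so its polar dual is again a smooth strongly convex hypersurface; in convex-analytic terms the Legendre transform of a strongly convex function is strongly convex and of class $C^\infty$, and the associated map $v\mapsto\p G/\p\b v$ is a fibre-preserving diffeomorphism $E\setminus\{0\}\to E^*\setminus\{0\}$. This yields that $G^*$ is strongly pseudoconvex on $E^*$.

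The heart of the argument, and the step I expect to be the main obstacle, is to show that $G^*$ has negative Kobayashi curvature. In the notation to be set up, the Kobayashi curvature is, up to normalization, the horizontal part of the Levi form of $\log$ of the metric on the total space, i.e. the Schur complement
\[
K_{\alpha\b\beta}=-\frac{\p^2 G}{\p z^\alpha\p\b z^\beta}+G^{k\b l}\,\frac{\p^2 G}{\p z^\alpha\p\b v^l}\,\frac{\p^2 G}{\p v^k\p\b z^\beta},
\]
which descends to $P(E)$ and measures the curvature of the tautological line bundle $\mc O_{P(E)}(-1)$ along directions transverse to the fibres; a direct check on a Hermitian $G=h_{i\b j}(z)v^i\b v^j$ recovers $K_{\alpha\b\beta}=\Theta_{\alpha\b\beta i\b j}v^i\b v^j$, so positivity of $K$ is the Griffiths/Kobayashi positivity. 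The plan is to compute the horizontal Levi form of $\log G^*$ by differentiating the Legendre matching relations $\xi_i=\p G/\p v^i$ (and their inverse) in the base directions $z^\alpha$, thereby expressing the horizontal Hessian of $\log G^*$ through that of $\log G$. The expected outcome is a clean sign reversal, exactly as in the Hermitian model. The subtlety is that the Legendre transform is only real-analytic, not holomorphic, so base differentiation of the matching conditions produces correction terms built from the mixed Hessians $\p^2G/\p z^\alpha\p\b v^l$; the hard part will be to check that these correction terms are precisely the ones packaged into the Schur complement, so that after projecting onto the horizontal distribution they cancel and only the sign-reversed leading term survives.

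Granting this computation, positive Kobayashi curvature of $G$ forces negative Kobayashi curvature of $G^*$. Combined with the strong pseudoconvexity established above, $G^*$ is then a strongly pseudoconvex complex Finsler metric on $E^*$ with negative Kobayashi curvature, and Kobayashi's Theorem 5.1 immediately yields that $E$ is ample, completing the proof.
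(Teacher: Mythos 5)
Your proposal has a genuine gap, and it sits exactly at the point the paper identifies as the main obstruction to the ``direct'' duality approach. You assert that because $G$ is strongly pseudoconvex, its fibrewise indicatrix is a strongly convex hypersurface, so that the polar dual is smooth and $v\mapsto \p G/\p\b v$ is a fibre-preserving diffeomorphism. This is false in the Finsler setting: strong pseudoconvexity means the \emph{complex} Hessian $(G_{i\b j})$ is positive definite along the fibres (condition F4), which is strictly weaker than fibrewise convexity of $G$ as a real function on $E_z\cong\mb{R}^{2r}$. (The two coincide for quadratic, i.e.\ Hermitian, metrics, which is why the Hermitian model you invoke is misleading here.) Without convexity the supremum defining $G^*$ need not be attained at a unique point, $G^*$ need not be smooth away from the zero section, and the Legendre matching relations you plan to differentiate are not available. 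The duality result you are implicitly relying on (Demailly, and Sommese) requires fibrewise strict convexity as a hypothesis, and the paper explicitly notes that it is unknown whether one can always arrange this. Moreover, the ``heart of the argument'' in your write-up --- the computation showing that the horizontal Levi form of $\log G^*$ is the sign-reversed Schur complement --- is only announced, not carried out; the authors state that finding computable relations between $G$ and $G^*$ is precisely the difficulty that makes this route intractable.

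The paper takes an entirely different, cohomological route that bypasses duality of metrics altogether. From positivity of the Kobayashi curvature one gets $c_1(\det E)>0$ by fibre integration, hence $P(E^*)$ is projective; the curvature form of $\mc{O}_{P(E)}(-1)$ has $n$ positive eigenvalues everywhere, so by the Andreotti--Grauert theorem $\mc{O}_{P(E)}(-1)$ is $(r-1)$-ample, giving $H^q(P(E),\mc{O}_{P(E)}(m)\otimes F)=0$ for $q<n$ and $m\gg 0$. Transporting this vanishing to $P(E^*)$ via the Leray-type isomorphisms $H^i(P(E^*),\mc{O}_{P(E^*)}(k)\otimes p_1^*F_1)\cong H^i(M,S^kE\otimes F_1)$ and Serre duality yields $H^i(P(E^*),\mc{O}_{P(E^*)}(m)\otimes F)=0$ for $i>0$ and $m\gg0$, and a Cartan--Serre--Grothendieck-type criterion then shows $\mc{O}_{P(E^*)}(1)$ is positive, i.e.\ $E$ is ample. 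If you want to salvage your approach you would need either to prove that a strongly pseudoconvex Finsler metric of positive Kobayashi curvature can be deformed to a fibrewise strictly convex one, or to carry out the Legendre computation without smoothness of $G^*$; both are open.
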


It is easy to see that $E$ admitting a Hermitian metric of Griffiths positive curvature is equivalent to the existence of a Griffiths negative Hermitian metric on the dual bundle $E^*$. However, in the Finslerian case, it is very difficult to find such a simple duality as in the Hermitian situation. A natural and direct way suggested by S. Kobayashi \cite[Section 5, page 162]{Ko1} is: for a given complex Finsler structure $G$ in $E$, considering the complex Finsler structure $G^*$ on $E^*$ defined by
\begin{align*}
	G^*(z,\zeta^*)=\sup_{G(z,\zeta)=1}|\langle\zeta^*,\zeta\rangle|^2,
\end{align*}
and trying to check that $G$ has positive curvature if and only if $G^*$ has negative curvature. Apparently, this is very hard to be achieved due to the difficulty in finding more computable relationships between $G$ and $G^*$. On the other hand, we know that if the Finsler metric $G$ is (fiberwise) strictly convex and has positive (resp. negative) curvature, then the dual metric $G^*$ has negative (resp. positive) curvature (see \cite[Theorem 2.5]{Dem} or \cite{Sommese}). However, it is unknown whether the ampleness of $E$ guarantees the existence of a convex strictly plurisubharmonic Finsler metric on $E^*$.

In the following we first briefly introduce our approach to Theorem \ref{thm0}.

For a strongly pseudoconvex complex Finsler metric $G$ on $E$  (cf. Definition \ref{defn1}), we still denote by $G$ the induced metric on the tautological line bundle $\mc{O}_{P(E)}(-1)$. Then the  $(1,1)$-form $\sqrt{-1}\p\b{\p}\log G$ admits a decomposition (cf. \cite{Ko1, FLW}, also  Proposition \ref{prop2}):
 \begin{align*}
\sqrt{-1}\p\b{\p}\log G=-\Psi+\omega_{FS},
 \end{align*}
 where $\omega_{FS}$ is a vertical $(1,1)$-form and positive definite along each fiber of $p:P(E)\to M$, and $\Psi$, the {\it Kobayashi curvature} of the Finsler metric $G$ named in \cite{FLW}, is a horizontal $(1,1)$-form.
The Finsler metric $G$ is of {\it positive (negative) Kobayashi curvature} if $\Psi>0 (<0)$ along horizontal directions (cf. Definition \ref{defn2}).

If $E$ admits a strongly pseudoconvex complex Finsler metric with positive Kobayashi curvature, we prove that $P(E^*)$ is projective (cf. Lemma \ref{lemma2}). In order to prove $E$ is ample or $\mc{O}_{P(E^*)}(1)$ is positive, from our Lemma \ref{lemma3},
it suffices to show for any holomorphic line bundle $F$ on $P(E^*)$, there exists a positive integer $m_0$ such that
	\begin{align}\label{0.2}
	H^i(P(E^*), \mc{O}_{P(E^*)}(m)\otimes F)=0,\quad i>0,\quad m\geq m_0.
	\end{align}
Let $p: P(E)\to M$ and  $p_1: P(E^*)\to M$ denote the natural projections. Since the Picard group of $P(E^*)$ has the following simple structure
\begin{align*}
	\text{Pic}(P(E^*))\simeq\text{Pic}(M)\oplus \mb{Z}\mc{O}_{P(E^*)}(1),
\end{align*}
there exist a line bundle $F_1$ on $M$ and an integer $a\in\mb{Z}$ such that
$F= p_1^*F_1\otimes \mc{O}_{P(E^*)}(a)$. Now by the Serre duality and \cite[Theorem 5.1]{BPV}, for any integer $m\geq -a$, we get (cf.   Proposition \ref{prop5}) for the proof of the following isomorphisms)
\begin{align*}
	 H^i(P(E^*), \mc{O}_{P(E^*)}(m)\otimes F)& \cong H^i(M, S^{m+a}E\otimes F_1)\cong H^{n-i}(M, S^{m+a}E^*\otimes F_1^*\otimes K_M)\\
	&\cong H^{n-i}(P(E), \mc{O}_{P(E)}(m+a)\otimes p^*(F_1^*\otimes K_M)).
\end{align*}
As pointed by Demailly \cite[Section 5.9, Page 247]{Dem}, for any locally free sheaf $\mc{F}$, it holds that
\begin{align*}
	H^q(P(E), \mc{O}_{P(E)}(m)\otimes \mc{F})=0,\quad q\neq n,\quad m\geq m_0	
\end{align*}
 for some integer $m_0>0$. By taking  $\mc{F}=p^*(F_1^*\otimes K_M)$, we finally get (\ref{0.2}) and therefore the ampleness of $E$.

Now we give some applications on Theorem \ref{thm0}. The  following two direct corollaries  follow from the famous theorems of S. Mori \cite[Theorem 8]{Mori}, W. Fulton and R. Lazarsfeld \cite[Theorem I]{FL}:.
\begin{cor}\label{cor0.1}
If $(M,G)$ is a strongly pseudoconvex complex Finsler manifold with positive Kobayashi curvature, then $M$ is biholomorphic to $\mb{P}^n$.
\end{cor}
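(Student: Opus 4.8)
The plan is to reduce the statement to Theorem \ref{thm0} applied to the tangent bundle and then to invoke Mori's characterization of projective space. First I would unwind the hypothesis: a strongly pseudoconvex complex Finsler manifold $(M,G)$ with positive Kobayashi curvature is by definition nothing but the holomorphic tangent bundle $E=TM$, regarded as a holomorphic vector bundle over the compact complex manifold $M$, equipped with the strongly pseudoconvex complex Finsler metric $G$ whose Kobayashi curvature $\Psi$ is positive along horizontal directions. Hence the hypotheses of Theorem \ref{thm0} are satisfied with $E=TM$, and that theorem immediately yields that $TM$ is ample.

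It remains to upgrade ampleness of $TM$ to the conclusion $M\cong\mb{P}^n$. Before applying Mori's (algebraic) theorem I would first record that $M$ is projective: since $TM$ is ample, its determinant $\det TM=K_M^{-1}$ is an ample line bundle, so $M$ is a Fano manifold and in particular projective by the Kodaira embedding theorem. With $M$ projective and $TM$ ample, the resolution of the Hartshorne conjecture due to S. Mori \cite[Theorem 8]{Mori}, which asserts that a projective manifold with ample tangent bundle is biholomorphic to projective space, gives $M\cong\mb{P}^n$ and completes the argument.

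The essential content of the corollary is thus carried entirely by the two deep inputs, Theorem \ref{thm0} on the Finslerian side and Mori's theorem on the algebraic side, so that the proof itself is only a short chain of implications. The points demanding genuine care are the faithful translation of the Finslerian hypothesis (strong pseudoconvexity together with positivity of $\Psi$) into the precise positivity required by Theorem \ref{thm0}, and the verification that $M$ is projective, which is exactly what allows Mori's algebraic result to be invoked. I do not expect any further obstruction beyond these matching conditions.
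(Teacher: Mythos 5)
Your proposal is correct and follows essentially the same route as the paper: apply Theorem \ref{thm0} to $E=TM$ to conclude that $TM$ is ample, then invoke Mori's theorem \cite[Theorem 8]{Mori} over $k=\mb{C}$. The extra remark verifying projectivity of $M$ (via ampleness of $\det TM=K_M^{-1}$ and Kodaira embedding) is a sensible precaution that the paper leaves implicit, and it does not change the argument.
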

Note that when the above Finsler metric $G$ is induced from a K\"ahler metric on $M$, Y.-T. Siu and S.-T Yau in \cite{SY} proved this result in a direct geometric way.
\begin{cor}\label{cor0.2}
All Schur polynomials are numerically positive for complex Finsler bundles with positive Kobayashi curvature.
\end{cor}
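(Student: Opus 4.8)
The plan is to derive the corollary as an immediate combination of Theorem \ref{thm0} with the positivity theorem of Fulton and Lazarsfeld, so that all the geometric work is already packaged in Theorem \ref{thm0}. First I would apply Theorem \ref{thm0}: since the bundle $E$ carries a strongly pseudoconvex complex Finsler metric with positive Kobayashi curvature, $E$ is an ample holomorphic vector bundle over the compact complex manifold $M$. This step removes all Finsler data and reduces the claim to a statement purely about ample bundles.

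Next I would recall the relevant notions. For a partition $\lambda=(\lambda_1\geq\cdots\geq\lambda_k\geq 0)$ the associated Schur polynomial in the Chern classes of $E$ is the weighted-homogeneous expression
\begin{align*}
	P_\lambda(c(E))=\det\big(c_{\lambda_i-i+j}(E)\big)_{1\leq i,j\leq k},
\end{align*}
with the conventions $c_0=1$ and $c_m=0$ for $m<0$. A weighted-homogeneous polynomial $P$ of weight $d$ in the Chern classes is called numerically positive for $E$ if $\int_V P(c(E))>0$ for every irreducible subvariety $V\subseteq M$ of dimension $d$.

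Finally I would invoke \cite[Theorem I]{FL}: Fulton and Lazarsfeld prove that a weighted-homogeneous polynomial in the Chern classes is numerically positive for all ample vector bundles precisely when it is a nonzero nonnegative linear combination of the Schur polynomials $P_\lambda$; in particular each $P_\lambda$ is itself numerically positive for every ample bundle. Combining this with the ampleness of $E$ obtained in the first step yields that all Schur polynomials are numerically positive for $E$, which is exactly the assertion of Corollary \ref{cor0.2}. I expect no genuine obstacle here: the entire analytic and algebraic difficulty is concentrated in Theorem \ref{thm0}, and once ampleness is available the conclusion follows directly from the Fulton--Lazarsfeld inequalities, requiring no additional estimate.
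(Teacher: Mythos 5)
Your proposal is correct and follows exactly the paper's own route: Theorem \ref{thm0} reduces the statement to ampleness of $E$, and the Fulton--Lazarsfeld theorem \cite[Theorem I]{FL} then gives the numerical positivity of all Schur polynomials. No further comment is needed.
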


\vskip 3mm

This article is organized as follows. In Section \ref{sec1}, we shall fix the notations and recall some basic definitions and facts on complex Finsler vector bundles, positive (negative) Kobayashi curvature. In Section \ref{sec2}, we will prove our main Theorem \ref{thm0}. In Section \ref{sec3}, we will give two applications on Theorem \ref{thm0} and prove Corollary \ref{cor0.1}, \ref{cor0.2}.

\vspace{5mm}
{\bf Acknowledgements.} The authors would like to thank Professor  Xiaokui Yang for many helpful discussions.

%%%%%%%%%%%%%%%%%%%%%%%%%%%%%%%%%%%%%%%%%%%

\section{Complex Finsler vector bundle}\label{sec1}

In this section, we shall fix the notations and recall some basic definitions and facts on complex Finsler vector bundles.  For more details we refer to \cite{Cao-Wong, Dem, FLW, Ko1, Wan1}.

We will use $z=(z^1,\cdots, z^n)$ to denote a local holomorphic coordinate system on $M$ and use $\{e_i\}_{1\leq i\leq r}$ to denote a local holomorphic frame of $E$. Then any element $v$ in $E$ can be written as
$$v=v^i e_i\in E,$$
where we adopt the summation convention of Einstein. In this way, one gets a local holomorphic coordinate system of the complex manifold $E$:
\begin{align}\label{coor}
(z;v)=(z^1,\cdots,z^n; v^1,\cdots, v^r).
\end{align}

\begin{defn}[\cite{Ko1}]\label{defn1}
A Finsler metric  $G$ on the holomorphic vector bundle $E$ is a continuous function $G:E\to\mathbb{R}$ satisfying the following conditions:
\begin{description}
  \item [F1)] $G$ is smooth on $E^o=E\setminus \{0\}$;
  \item[F2)] $G(z,v)\geq 0$ for all $(z,v)\in E$ with $z\in M$ and $v\in\pi^{-1}(z)$, and $G(z,v)=0$ if and only if $v=0$;
  \item[F3)] $G(z,\lambda v)=|\lambda|^2G(z,v)$ for all $\lambda\in\mathbb{C}$.
\end{description}
Moreover, $G$ is called strongly pseudoconvex if it satisfies
\begin{description}
  \item[F4)] the Levi form ${\sqrt{-1}}\partial\bar\partial G$ on $E^o$ is positive definite along each fiber $E_z=\pi^{-1}(z)$ for $z\in M$.
\end{description}
\end{defn}
Clearly, any Hermitian metric on $E$ is naturally a strongly pseudoconvex complex Finsler metric on it.

We write
\begin{align*}
\begin{split}
& G_{i}=\partial G/\partial v^{i},\quad G_{\bar j}=\partial G/\partial\bar{v}^{j},\quad G_{i\bar{j}}=\partial^{2}G/\partial v^{i}\partial\bar{v}^{j},\\
& G_{i\alpha}=\partial^{2}G/\partial v^{i}\partial z^{\alpha}, \quad G_{i\bar j\bar\beta}=\partial^3G/\partial v^{i}\partial\bar v^j\partial\bar z^{\beta},\quad etc.,
\end{split}
\end{align*}
to denote the differentiation with respect to $v^i,\bar v^j$ ($1\leq i,j\leq r$), $z^\alpha,\bar z^\beta$ ($1\leq\alpha,\beta\leq n$).

If $G$ is a strongly pseudoconvex complex Finsler metric on $M$, then there is a canonical h-v decomposition of the holomorphic tangent bundle $TE^o$ of $E^o$ (see \cite[\S 5]{Cao-Wong} or \cite[\S 1]{FLW}).
\begin{align*}
TE^o=\mc{H}\oplus \mc{V}.
\end{align*}
In terms of local coordinates,
\begin{align}\label{HV}
\mc{H}=\text{span}_{\mb{C}}\left\{\frac{\delta}{\delta z^\alpha}=\frac{\p}{\p z^\alpha}-G_{\alpha\b{j}}G^{\b{j}k}\frac{\p}{\p v^k}, 1\leq \alpha\leq n\right\},\quad \mc{V}=\text{span}_{\mb{C}}\left\{\frac{\p}{\p v^i}, 1\leq i\leq r\right\}.
\end{align}
The dual bundle $T^*E^o$ also has a smooth h-v decomposition $T^*E^o=\mc{H}^*\oplus\mc{V}^*$:
\begin{align}
\mc{H}^*=\text{span}_{\mb{C}}\{dz^{\alpha}, 1\leq \alpha\leq n\},\quad \mc{V}^*=\text{span}_{\mb{C}}\{\delta v^i=dv^i+G^{\b{j}i}G_{\alpha\b{j}}dz^\alpha,\quad 1\leq i\leq r\}.
\end{align}

With respect to the h-v decomposition (\ref{HV}), the $(1,1)$-form $\sqrt{-1}\p\b{\p}\log G$ has the following decomposition.
\begin{prop}[\cite{Aikou, Ko1}]\label{prop1}
Let $G$ be a strongly pseudoconvex complex Finsler metric on $E$, one has
\begin{align*}
\sqrt{-1}\p\b{\p}\log G=-\Psi+\omega_{V}
\end{align*}
on $E^o$,
where $\Psi$ and $\omega_V$ are given by
\begin{align}\label{Psi}
\Psi=\sqrt{-1}R_{i\b{j}\alpha\b{\beta}}\frac{v^i\b{v}^j}{G}dz^\alpha\wedge d\b{z}^\beta,\quad \omega_{V}=\sqrt{-1}\frac{\p^2 \log G}{\p v^i\p\b{v}^j}\delta v^i\wedge \delta\b{v}^j,
\end{align}
with
\begin{align*}
	R_{i\b{j}\alpha\b{\beta}}=-\frac{\p^2 G_{i\b{j}}}{\p z^\alpha\p\b{z}^\beta}+G^{\b{l}k}\frac{\p G_{i\b{l}}}{\p z^\alpha}\frac{\p G_{k\b{j}}}{\p\b{z}^\beta}.
\end{align*}
\end{prop}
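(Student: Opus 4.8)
The plan is to prove the identity by a direct computation in the adapted coframe, reducing everything to the homogeneity relations forced by axiom (F3). First I would record the Euler relations obtained from $G(z,\lambda v)=|\lambda|^2G(z,v)$ by differentiating in $\lambda,\b\lambda$ and setting $\lambda=1$:
\begin{align*}
G_iv^i=G,\quad G_{\b j}\b v^j=G,\quad G_{i\b j}v^i=G_{\b j},\quad G_{i\b j}\b v^j=G_i,\quad G_{i\b j}v^i\b v^j=G.
\end{align*}
Since each $G_{i\b j}$ is homogeneous of bidegree $(0,0)$ in $(v,\b v)$ and $z$-differentiation commutes with $v$-differentiation, these yield the further consequences
\begin{align*}
\frac{\p G_{i\b l}}{\p z^\alpha}v^i=G_{\alpha\b l},\quad \frac{\p G_{k\b j}}{\p\b z^\beta}\b v^j=G_{k\b\beta},\quad \frac{\p G_{i\b j}}{\p z^\alpha}v^i\b v^j=G_\alpha,\quad \frac{\p^2G_{i\b j}}{\p z^\alpha\p\b z^\beta}v^i\b v^j=G_{\alpha\b\beta},
\end{align*}
together with $G_{\alpha\b k}\b v^k=G_\alpha$, $G_{\b\beta l}v^l=G_{\b\beta}$ and the identities $G_iG^{\b k i}=\b v^k$, $G_{\b j}G^{\b j l}=v^l$ coming from the inverse relation $G^{\b k i}G_{i\b j}=\delta^{\b k}_{\b j}$. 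These are the only analytic inputs; the rest is bookkeeping.

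Next I would expand $\sqrt{-1}\p\b\p\log G$ in the coordinate coframe $\{dz^\alpha,dv^i\}$, computing its four blocks from the Leibniz rule $\p^2\log G=\p^2G/G-(\p G)(\b\p G)/G^2$ applied to the relevant pair of variables, and then substitute $dv^i=\delta v^i-G^{\b k i}G_{\alpha\b k}\,dz^\alpha$ together with its conjugate to pass to the adapted coframe $\{dz^\alpha,\delta v^i\}$. Collecting terms produces a vertical block in $\delta v^i\wedge\delta\b v^j$, two mixed blocks, and a horizontal block in $dz^\alpha\wedge d\b z^\beta$.

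The vertical block is immediate: the coefficient of $\delta v^i\wedge\delta\b v^j$ is exactly $\p^2\log G/\p v^i\p\b v^j$, which is $\omega_V$. For the mixed block I would verify that the coefficient of $dz^\alpha\wedge\delta\b v^j$, namely
\begin{align*}
\frac{\p^2\log G}{\p z^\alpha\p\b v^j}-\frac{\p^2\log G}{\p v^i\p\b v^j}\,G^{\b k i}G_{\alpha\b k},
\end{align*}
vanishes identically; here $G^{\b k i}G_{i\b j}=\delta^{\b k}_{\b j}$, $G_iG^{\b k i}=\b v^k$ and $G_{\alpha\b k}\b v^k=G_\alpha$ produce the cancellation. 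This is precisely the purpose for which the nonlinear connection terms in $\delta v^i$ were chosen, and it is the step one must set up with care. Once the mixed blocks are gone, the horizontal block collapses to
\begin{align*}
\frac{\p^2\log G}{\p z^\alpha\p\b z^\beta}-\frac{\p^2\log G}{\p v^i\p\b v^j}\,N^i_\alpha\b N^j_\beta,\quad N^i_\alpha=G^{\b k i}G_{\alpha\b k},\quad \b N^j_\beta=G^{\b j l}G_{\b\beta l},
\end{align*}
and expanding the two logarithmic second derivatives and rewriting $G_{\alpha\b\beta}$, $G_\alpha$, $G_{\b\beta}$ and the contractions through the differentiated Euler relations above, I would identify this with $-R_{i\b j\alpha\b\beta}v^i\b v^j/G$, giving $-\Psi$. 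The main obstacle is organizing the index bookkeeping so that the spurious $G_\alpha G_{\b\beta}/G^2$ terms cancel in pairs and the surviving pieces assemble into the curvature coefficient $R_{i\b j\alpha\b\beta}$; no estimate is involved.
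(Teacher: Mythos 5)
Your computation is correct and is exactly the standard argument behind this decomposition (the paper itself states the proposition by citation to Aikou and Kobayashi rather than proving it): the Euler homogeneity relations give $G_iG^{\bar k i}=\bar v^k$ and $G_{\alpha\bar k}\bar v^k=G_\alpha$, which force the mixed $dz^\alpha\wedge\delta\bar v^j$ blocks to vanish, and the surviving horizontal coefficient $\frac{1}{G}\bigl(G_{\alpha\bar\beta}-G^{\bar l k}G_{\alpha\bar l}G_{k\bar\beta}\bigr)$ matches $-R_{i\bar j\alpha\bar\beta}v^i\bar v^j/G$ after contracting the differentiated Euler identities. No gaps.
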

\begin{defn}[{\cite[Definition 1.2]{FLW}}]\label{defn2}
The form $\Psi$ defined by (\ref{Psi}) is called the {\it Kobayashi curvature} of the complex Finsler vector bundle $(E, G)$. A strongly pseudoconvex complex Finsler metric $G$ is said to be of {\it positive (respectively, negative) Kobayashi curvature} if
\begin{align*}
\left(R_{i\b{j}\alpha\b{\beta}}\frac{v^i\b{v}^j}{G}\right)
\end{align*}
is a positive (respectively, negative) definite matrix on $E^o$.
 \end{defn}
 \begin{rem}
 Note  that  the positive (resp. negative) Kobayashi curvature is a natural generalization of Griffiths positive (resp. negative)  of a Hermitian vector bundle (cf. \cite[Definition 2.1]{Liu1}). In fact, if a Finsler metric comes from a Hermitian metric, then the Finsler metric has positive (resp. negative) Kobayashi curvature is equivalent to  Griffiths positive (resp. negative).
 \end{rem}
 Let $q$ denote the natural projection
\begin{align}
q: E^o\to P(E):=E^o/\mb{C}^*\quad (z; v)\mapsto (z;[v]):=(z^1,\cdots, z^n; [v^1,\cdots, v^r]),	
\end{align}
which gives a local coordinate system of $P(E)$ by
\begin{align}\label{1.2}
(z;w):=(z^1,\cdots, z^n; w^1,\cdots, w^{r-1})=\left(z^1,\cdots,z^n; \frac{v^1}{v^k},\cdots,\frac{v^{k-1}}{v^k},\frac{v^{k+1}}{v^{k}},\cdots, \frac{v^r}{v^k}\right)
\end{align}
on $$U_k:=\{(z,[v])\in P(E), v^k\neq 0\}.$$
Denote by $((\log G)^{\bar{b}a})_{1\leq a,b\leq r-1}$  the inverse of the matrix  $\left((\log G)_{a\b{b}}:=\frac{\p^2\log G}{\p w^a\b{w}^b}\right)_{1\leq a,b\leq r-1}$ and set
\begin{align}\label{1.3}
\delta w^a=d w^a+(\log G)_{\alpha\b{b}}(\log G)^{\b{b}a}dz^{\alpha}.	
\end{align}
By using the coframe $\{\delta w^a\}$ of $\mc{V}^*$, there is a well-defined  vertical $(1,1)$-form on $P(E)$ by
\begin{align}\label{vertical form}
\omega_{FS}:=\sqrt{-1}\frac{\p^2\log G}{\p w^a\p\b{w}^b}\delta w^a\wedge \delta\b{w}^b.	
\end{align}
From \cite[Lemma 1.4, Remark 1.5, Proposition 1.6]{Wan1} one has
\begin{prop}\label{prop2}
\begin{itemize}
\item[(i)] $q^*\omega_{FS}=\omega_V$;
\item[(ii)] $\sqrt{-1}\p\b{\p}\log G=-\Psi+\omega_{FS}$ on $P(E)$;
\item[(iii)] A Finsler metric $G$ is strongly pseudoconvex if and only if $\omega_{FS}$ is positive definite along each fiber of $p:P(E)\to M$.
\end{itemize}
\end{prop}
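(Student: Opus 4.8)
The three assertions rest on Proposition \ref{prop1}, which already supplies the decomposition $\sqrt{-1}\partial\bar\partial\log G=-\Psi+\omega_V$ on $E^o$; the whole point is to push this identity down from $E^o$ to $P(E)$ and then to extract the convexity criterion. The plan is to exploit homogeneity. On the chart $U_k$ I would write $G(z,v)=|v^k|^2\,\tilde G(z,w)$, where $\tilde G$ is the restriction of $G$ to the slice $\{v^k=1\}$ and $w=(v^i/v^k)_{i\neq k}$ are the fiber coordinates of (\ref{1.2}); condition F3) then gives $\log G=\log|v^k|^2+(\log\tilde G)\circ q$ with $\log|v^k|^2$ pluriharmonic, so that $\sqrt{-1}\partial\bar\partial\log G=q^*\big(\sqrt{-1}\partial\bar\partial\log\tilde G\big)$ on $E^o$. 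In particular $\sqrt{-1}\partial\bar\partial\log G$ descends to a globally well-defined $(1,1)$-form on $P(E)$, independent of the choice of chart.

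For (i) the task is to match the vertical components. The key step is to check that $q$ intertwines the two horizontal distributions: that the horizontal lift $\delta/\delta z^\alpha=\partial/\partial z^\alpha-G_{\alpha\bar j}G^{\bar j k}\partial/\partial v^k$ on $E^o$ is $q$-related to $\partial/\partial z^\alpha-(\log G)_{\alpha\bar b}(\log G)^{\bar b a}\partial/\partial w^a$ on $P(E)$, equivalently that $q^*\delta w^a$ is a combination of the $\delta v^i$ whose $dz^\alpha$-parts cancel. Granting this compatibility, the chain rule together with Euler's relation $v^iG_{i\bar j}=G_{\bar j}$ does the rest: the latter forces $v^i(\log G)_{i\bar j}=0$, so the vertical Hessian $\big((\log G)_{i\bar j}\big)$ has rank $r-1$ with kernel $\mathbb{C}v$ and is exactly the pullback of the nondegenerate $w$-Hessian $\big((\log\tilde G)_{a\bar b}\big)$; combining this with the coframe identification yields $q^*\omega_{FS}=\omega_V$.

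Part (ii) is then formal. The coefficient $R_{i\bar j\alpha\bar\beta}v^i\bar v^j/G$ of $\Psi$ is homogeneous of degree zero in $v$ and the form is horizontal, so $\Psi$ also descends to $P(E)$; substituting $\omega_V=q^*\omega_{FS}$ from (i) into Proposition \ref{prop1} exhibits $\sqrt{-1}\partial\bar\partial\log G+\Psi-\omega_{FS}$ as a form on $P(E)$ whose pullback under the submersion $q$ vanishes, and since $q^*$ is injective on forms the identity holds on $P(E)$. For (iii) I would argue pointwise on a fixed fiber by linear algebra. With $H_{i\bar j}:=\partial^2\log G/\partial v^i\partial\bar v^j=G_{i\bar j}/G-G_iG_{\bar j}/G^2$ and the relations $G_i=\bar v^jG_{i\bar j}$, $G=G_{i\bar j}v^i\bar v^j=|v|_G^2$, a short computation gives $H_{i\bar j}\xi^i\bar\xi^j=\big(|\xi|_G^2|v|_G^2-|\langle\xi,v\rangle_G|^2\big)/|v|_G^4$ for the Hermitian form $(G_{i\bar j})$. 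If $G$ is strongly pseudoconvex, so that $(G_{i\bar j})>0$, Cauchy--Schwarz makes this nonnegative with equality iff $\xi\parallel v$; hence $H$ is positive on $\mathbb{C}^r/\mathbb{C}v\cong T_{[v]}P(E_z)$, i.e.\ $\omega_{FS}>0$ along the fibers. Conversely, from $G_{i\bar j}=G\,H_{i\bar j}+G_iG_{\bar j}/G$ one reads $G_{i\bar j}\xi^i\bar\xi^j=G\,H_{i\bar j}\xi^i\bar\xi^j+|G_i\xi^i|^2/G$, which is positive for $\xi\notin\mathbb{C}v$ when $\omega_{FS}>0$ along fibers and equals $|c|^2G>0$ when $\xi=cv$, so $(G_{i\bar j})>0$ and $G$ is strongly pseudoconvex.

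The main obstacle is the compatibility of the two h-v splittings in step (i): verifying that the Finsler connection on $E^o$ carried by the coframe $\{\delta v^i\}$ projects under $q$ onto the connection on $P(E)$ carried by $\{\delta w^a\}$. Once this $q$-relatedness of horizontal distributions is in hand, the remaining identities are routine manipulations with the chain rule and Euler's identity.
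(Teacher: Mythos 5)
Your proposal follows essentially the same route as the paper: part (iii) is established by exactly the computation given there (the identity $(-\sqrt{-1})\omega_V(u,\bar u)=\frac{1}{G^2}\bigl(\|u\|^2\|v\|^2-|\langle u,v\rangle|^2\bigr)$ for one direction, and $G_{i\bar j}u^i\bar u^j=\frac{1}{G}|G_iu^i|^2+G(-\sqrt{-1})\omega_V(u,\bar u)$ together with Euler's identity $G_iv^i=G$ for the converse), and (ii) is deduced formally from (i) in both treatments. The one step you explicitly leave unverified --- the $q$-relatedness of the coframes $\{\delta v^i\}$ and $\{\delta w^a\}$, which is the substance of (i) --- is precisely the step the paper itself does not prove but delegates to \cite[Lemma 1.4]{Wan1}, so your outline is correct and at the same level of completeness as the paper's own proof.
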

\begin{proof}
 (i) is exactly \cite[Lemma 1.4]{Wan1} and (ii) follows directly from (i). The proof of (iii) can be found in \cite[Proposition 1.6]{Wan1}. For readers' convenience, we give the proof here.

  By Definition \ref{defn1}, $G$ is strongly pseudoconvex if $(G_{i\b{j}})$ is a positive definite matrix, which gives a Hermitian metric $\langle\cdot,\cdot\rangle$ on the vertical subbundle $\mc{V}$.
	Denote $v=v^i\frac{\p}{\p v^i}$. If $G$ is strongly pseudoconvex, then for any $u=u^i\frac{\p}{\p v^i}\in\mc{V}$,
	\begin{align*}
	\begin{split}
	(-\sqrt{-1})\omega_{V}(u,\b{u}) &=\frac{1}{G^2}(GG_{i\b{j}}-G_iG_{\b{j}})u^i\b{u}^j\\
	&=\frac{1}{G^2}(\|u\|^2\|v\|^2-|\langle u,v\rangle|^2)\geq 0,	
	\end{split}
	\end{align*}
the equality holds if and only if $u=\lambda v$ for some constant $\lambda\in\mb{C}$. So $\omega_V$ has $r-1$ positive eigenvalues and one zero eigenvalue. Since $\omega_V(v,\o{v})=0$ and $q_*(v)=0$ and by (i), $\omega_{FS}$ is positive definite along each fiber of $p: P(E)\to M$.
Conversely, if $\omega_{FS}$ is positive definite along each fiber, then $\omega_V=q^*\omega_{FS}$ has $r-1$ positive eigenvalues and one zero eigenvalue. So $\omega_V(v,\o{v})=0$ and
\begin{align*}
G_{i\b{j}}u^i\b{u}^j=\frac{1}{G}|G_i u^i|^2+G(-\sqrt{-1})\omega_V(u,\o{u})\geq 0.	
\end{align*}
 Moreover, $G_{i\b{j}}u^i\b{u}^j=0$ if and only if $u=\lambda v$ and $G_iv^i=0$, if and only if $\lambda=0$. So $(G_{i\b{j}})$ is a positive definite matrix.
\end{proof}

\section{Positive Kobayashi curvature} \label{sec2}

In this section, we will prove a complex Finsler vector bundle  with positive Kobayashi curvature must be ample.

Let $G$ be a strongly pseudoconvex complex Finsler metric on $E$ with positive Kobayashi curvature, that is,
\begin{align}
\sqrt{-1}\p\b{\p}\log G=-\Psi+\omega_{FS}	
\end{align}
  with $\Psi>0$ on horizontal subbundle $\mc{H}$, and $\omega_{FS}>0$ along each fiber of $p:P(E)\to M$. Then
  \begin{lemma}\label{lemma2}
  	If $E$ admits  a strongly pseudoconvex complex Finsler metric with positive Kobayashi curvature, then $P(E^*)$ is projective.
  \end{lemma}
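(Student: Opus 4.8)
The plan is to produce a positive line bundle on $P(E^*)$, since a compact complex manifold carrying a positive line bundle is projective by the Kodaira embedding theorem. The natural candidate for such a line bundle is $\mc{O}_{P(E^*)}(1)$, whose positivity is exactly the ampleness we are ultimately after; but for this \emph{lemma} it suffices to exhibit \emph{some} positive line bundle, and the relevant geometry lives more directly on $P(E)$. Concretely, the hypothesis gives us on $P(E)$ the identity $\sqrt{-1}\p\b{\p}\log G=-\Psi+\omega_{FS}$ with $\Psi>0$ along the horizontal subbundle $\mc{H}$ and $\omega_{FS}>0$ along the fibers of $p:P(E)\to M$. The first thing I would do is read off from this decomposition what positivity we can extract. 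The form $-\sqrt{-1}\p\b{\p}\log G=\Psi-\omega_{FS}$ is positive along horizontal directions and negative along fiber directions, so neither $\log G$ nor $-\log G$ is directly plurisubharmonic on all of $P(E)$; the mixed signature is the crux of the matter.

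The key idea is to combine the horizontal positivity coming from $\Psi$ with a pullback Kähler form from the base to absorb the unfavorable sign, and to transport the whole picture to $P(E^*)$ where the fiber sign is reversed. More precisely, recall that $\log G$ is (minus) the local weight of the tautological metric on $\mc{O}_{P(E)}(-1)$, so $\omega_{FS}$ represents the first Chern form of $\mc{O}_{P(E)}(1)$ along the fibers, while $\Psi$ carries the curvature information in the base directions. On the dual side $P(E^*)$, the tautological and hyperplane bundles switch roles, and the fiberwise Fubini--Study form appears with the opposite orientation relative to the base curvature. I would therefore construct a closed real $(1,1)$-form on $P(E^*)$ of the shape $c_1(\mc{O}_{P(E^*)}(1),G^*)+C\,p_1^*\omega_M$, where $\omega_M$ is any Hermitian (or Kähler, after a further argument) form on $M$ and $C\gg 0$ is a large constant, and show that for $C$ large enough this form is strictly positive on $P(E^*)$: the fiber directions are controlled by the Fubini--Study piece, which is positive with the \emph{correct} sign on $P(E^*)$, while the horizontal piece, which could a priori have the wrong sign, is dominated by $C\,p_1^*\omega_M$ in the base directions and is in fact helped by the positivity of $\Psi$ once the roles are dualized.

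The main obstacle is exactly this dualization step: transferring the positive Kobayashi curvature $\Psi$ on $P(E)$ into usable positivity on $P(E^*)$, since — as the introduction stresses — there is no clean metric duality between $G$ on $E$ and its Legendre-type dual $G^*$ on $E^*$, and one cannot simply assert that positive Kobayashi curvature for $G$ gives negative Kobayashi curvature for $G^*$. My strategy to circumvent this is to avoid dualizing the \emph{metric} and instead dualize only the positivity statement at the level of cohomology or of the relative tangent geometry: fiberwise, $P(E_z^*)$ is the dual projective space to $P(E_z)$, and the mixed-signature form on $P(E)$ already certifies that, for a sufficiently positive twist by the base, one obtains a Kähler class. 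Thus the cleanest route is probably to first show $P(E)$ itself is Kähler (hence Moishezon/projective) by proving that $\Psi+N\,p^*\omega_M$ together with the fiberwise $\omega_{FS}$ assembles into a strictly positive closed $(1,1)$-form for $N$ large, and then to note that $P(E^*)$ and $P(E)$ share the same base $M$ and have biholomorphic (indeed dual) fibers, so that projectivity of the total space transfers — or, more robustly, to rerun the same large-twist estimate directly with the metric $G^*$ on $\mc{O}_{P(E^*)}(1)$, where strong pseudoconvexity of $G$ guarantees the fiber-direction positivity of the Fubini--Study part without needing any curvature sign for $\Psi^*$. In writing the proof I would keep the argument self-contained by working with the explicit local expressions for $\omega_{FS}$ and $\Psi$ from Proposition \ref{prop2}, verifying strict positivity of the twisted form in a fiber-adapted frame, and then invoking Kodaira embedding.
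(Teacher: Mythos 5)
Your overall strategy --- exhibit a positive line bundle on $P(E^*)$ by twisting $\mc{O}_{P(E^*)}(1)$ with a large pullback from the base and then invoke Kodaira embedding --- has the right shape, and you are correct that the fiberwise positivity of $\mc{O}_{P(E^*)}(1)$ requires no curvature information about any dual of $\Psi$. But there is a genuine gap at the base: the term $C\,p_1^*\omega_M$ you add is the curvature of a holomorphic line bundle only if $\omega_M$ is itself (up to a factor of $2\pi$) the Chern form of a line bundle on $M$, i.e.\ only if $M$ already carries a positive line bundle. The hypotheses make $M$ merely a compact complex manifold --- it is not assumed K\"ahler, let alone projective --- so with an arbitrary Hermitian form $\omega_M$ your twisted $(1,1)$-form is not the curvature of any line bundle, Kodaira embedding does not apply, and at best you would conclude that $P(E^*)$ is K\"ahler (and even that needs $d\omega_M=0$). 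Your parenthetical ``(or K\"ahler, after a further argument)'' is precisely where the real work lies, and the proposal never supplies it. The alternative you sketch --- transferring projectivity from $P(E)$ to $P(E^*)$ because they share a base and have dual fibers --- is not a valid inference on its own.

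The missing idea, which is essentially the entire content of the paper's proof, is to extract a positive line bundle on $M$ from the Kobayashi curvature by fiber integration. Using $c_1(E)=-\int_{P(E)/M}c_1(\mc{O}_{P(E)}(1))^{r}$ together with the decomposition $\sqrt{-1}\p\b{\p}\log G=-\Psi+\omega_{FS}$, one finds
\begin{align*}
c_1(\det E)=\left[\frac{r}{(2\pi)^r}\int_{P(E)/M}\Psi\wedge\omega_{FS}^{\,r-1}\right],
\end{align*}
and since $\Psi>0$ on horizontal directions and $\omega_{FS}>0$ along the fibers, the fiber integral is a positive $(1,1)$-form on $M$. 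Hence $\det E$ is a positive line bundle, $M$ is projective, and $p_1^*(\det E)^{k}\otimes\mc{O}_{P(E^*)}(1)$ is positive on $P(E^*)$ for $k\gg 0$ --- this is exactly the integral twist your estimate needs. Replacing $C\,p_1^*\omega_M$ by $k$ times the curvature of $\det E$ (and using any smooth Hermitian metric on $E^*$ for the fiberwise positive metric on $\mc{O}_{P(E^*)}(1)$, since the dual Finsler metric $G^*$ need not be smooth), the rest of your outline goes through.
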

\begin{proof}
From \cite[Lemma 2.3]{FLW}, the first Chern class $c_1(\det E)$ satisfies
\begin{aligns}
c_1(\det E)&=c_1(E)=-\int_{P(E)/M}c_1(\mc{O}_{P(E)}(1))^r\\
&=\left[-\int_{P(E)/M}\left(\frac{\sqrt{-1}}{2\pi}\p\b{\p}\log G\right)^r\right]\\
&=\left[\frac{r}{(2\pi)^r}\int_{P(E)/M}\Psi\wedge\omega^{r-1}_{FS}\right].
\end{aligns}
	By assumption, $\int_{P(E)/M}\Psi\wedge\omega^{r-1}_{FS}$ is a positive $(1,1)$-form on $M$, which yields that $\det E$ is a positive line bundle. By take $k$ large enough, the line bundle $$p_1^*(\det E)^{k}\otimes \mc{O}_{P(E^*)}(1)\to P(E^*)$$ is also a positive line bundle, where $p_1: P(E^*)\to M$. By Kodaira embedding theorem, $P(E^*)$ is projective.
\end{proof}
The following lemma can be found in the proof of \cite[Lemma 5.2]{Shiffman}.
\begin{lemma}\label{lemma3}
Let $L\to M$ be a line bundle over projective manifold $M$, and satisfies for any  line bundle $F$ over $M$, there exists an integer $m_0>0$, $H^i(M,F\otimes L^m)=0$ for $i>0$, $m\geq m_0$. Then $L$ is a  positive line bundle over $M$.
\end{lemma}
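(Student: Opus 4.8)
The plan is to show that the stated cohomological vanishing, although a priori assumed only for line bundles $F$, is already strong enough to yield Serre's cohomological criterion for ampleness applied to \emph{all} coherent sheaves; ampleness together with Kodaira's embedding theorem then delivers positivity. The projectivity of $M$ enters precisely to supply a fixed ample line bundle $A$ and the associated resolutions by sums of line bundles.

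First I would fix an ample line bundle $A$ on $M$. Since $M$ is projective, Serre's theorem ensures that every coherent sheaf becomes globally generated after twisting by a high power of $A$, so it is a quotient of a finite direct sum of copies of $A^{-k}$ for suitable $k$. Iterating this construction, an arbitrary coherent sheaf $\mc{F}$ admits a resolution of length $n=\dim M$,
\[
0\to Z_{n-1}\to P_{n-1}\to\cdots\to P_0\to\mc{F}\to 0,
\]
in which each $P_j$ is a finite direct sum of line bundles and $Z_{n-1}$ is merely coherent (no further control on the tail is required, which is the key point below).

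The heart of the argument is a dimension shift along this resolution. Tensoring each short exact sequence $0\to Z_j\to P_j\to Z_{j-1}\to 0$ (with $Z_{-1}:=\mc{F}$) by the locally free $L^m$ and passing to the long exact cohomology sequence, the hypothesis applied to each line-bundle summand of $P_j$ forces $H^{\geq 1}(P_j\otimes L^m)=0$ once $m$ exceeds a bound depending only on the finitely many line bundles appearing in $P_0,\dots,P_{n-1}$. The connecting homomorphisms then become isomorphisms, giving
\[
H^i(\mc{F}\otimes L^m)\cong H^{i+1}(Z_0\otimes L^m)\cong\cdots\cong H^{i+n}(Z_{n-1}\otimes L^m)
\]
for all $i\geq 1$ and $m\gg 0$. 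Since $i+n>n=\dim_{\mb{C}}M$, Grothendieck vanishing makes the right-hand side zero, so $H^i(M,\mc{F}\otimes L^m)=0$ for all $i>0$ and $m\gg 0$. This is exactly Serre's cohomological characterization of ampleness; applying it to the ideal sheaves $\mc{I}_x$, $\mc{I}_{\{x,y\}}$ and $\mc{I}_x^2$ shows that a high power $L^m$ is globally generated and separates points and tangent vectors, hence very ample, so $L$ is ample. By Kodaira's embedding theorem $L$ then carries a metric of positive curvature, i.e. $L$ is a positive line bundle.

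I expect the main obstacle to be precisely the passage from the line-bundle hypothesis to vanishing for arbitrary coherent sheaves. The temptation is to resolve $\mc{F}$ so that the tail $Z_{n-1}$ is again a sum of line bundles, which in general fails; the resolution is to recognize that no information about $Z_{n-1}$ is needed, because the dimension shift transports its relevant cohomology into degree $>n$, where it vanishes for trivial reasons. The remaining points to verify are routine: keeping the threshold for $m$ uniform across the finitely many steps of the resolution, and invoking Grothendieck vanishing on the compact $n$-dimensional manifold $M$.
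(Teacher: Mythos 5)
Your proposal is correct and follows essentially the same route as the paper: resolve an arbitrary coherent sheaf by finite direct sums of line bundles, dimension-shift along the resolution using the line-bundle hypothesis, kill the tail by vanishing above $\dim M$, and conclude via the Cartan--Serre--Grothendieck criterion that $L$ is ample, hence positive. Your observation that no control on the tail of the resolution is needed is a small simplification of the paper's appeal to the Hilbert syzygy theorem, but the argument is otherwise identical.
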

\begin{proof}
For any coherent sheaf $\mc{F}$ over the projective algebraic manifold $M$, there is a resolution
$$0\to\mc{K}\to \oplus_{s_n}E_n\to \cdots\to\oplus_{s_2}E_2\to \oplus_{s_1}E_1\to \mc{F}\to 0,$$	
where the $E_i$ are holomorphic line bundles over $M$. By the Hilbert syzygy theorem, $\mc{K}=V$ for some holomorphic vector bundle $V$ on $M$. Hence for $m$ sufficiently large, we obtain
\begin{align}
H^i(M,L^m\otimes \mc{F})=H^{n+i}(M,L^m\otimes V)=0,\quad i>0,\quad m\geq m_0	
\end{align}
for some positive integer $m_0$.
By Cartan-Serre-Grothendieck theorem, see e.g. \cite[Theorem 5.1]{Shiffman}, $L$ is a positive line bundle.
\end{proof}

The following vanishing theorem appeared in \cite[Section 5.9]{Dem}, which can be proved by Andreotti-Grauert theorem \cite[Theorem 14]{AG}.
\begin{lemma}\label{lemma4}
 	If $E$ admits  a strongly pseudoconvex complex Finsler metric with positive Kobayashi curvature, then for any holomorphic line bundle $F$ on $P(E)$ there exists an integer $m_0>0$ such that
\begin{align}
H^q(P(E), \mc{O}_{P(E)}(m)\otimes F)=0,\quad q< n,\quad m\geq m_0.	
\end{align}
\end{lemma}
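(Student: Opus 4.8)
The plan is to realize this statement as an instance of the Andreotti--Grauert vanishing theorem for a line bundle of constant curvature signature, the relevant line bundle being $\mc{O}_{P(E)}(1)$ equipped with the metric induced by $G$.

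First I would record the Chern curvature of $\mc{O}_{P(E)}(1)$. Since $G$ is the metric on the tautological line bundle $\mc{O}_{P(E)}(-1)$, the curvature of $\mc{O}_{P(E)}(1)$ is $\sqrt{-1}\p\b\p\log G$, which by Proposition \ref{prop2}(ii) is globally defined on $P(E)$ and equals $-\Psi+\omega_{FS}$. The decisive point is its signature. Here $\Psi$ is a horizontal $(1,1)$-form and $\omega_{FS}$ a vertical one, so in the adapted coframe $\{dz^\alpha,\delta w^a\}$ of $T^{*}P(E)$ the curvature is block diagonal with no mixed horizontal--vertical terms. Positive Kobayashi curvature means $\Psi$ is positive definite on the $n$-dimensional horizontal bundle $\mc{H}$, while strong pseudoconvexity, through Proposition \ref{prop2}(iii), makes $\omega_{FS}$ positive definite on the $(r-1)$-dimensional vertical bundle $\mc{V}$. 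Hence $-\Psi+\omega_{FS}$ is everywhere nondegenerate with exactly $n$ negative and $r-1$ positive eigenvalues, so its signature is constant over the compact manifold $P(E)$ of dimension $N:=n+r-1$.

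Next I would absorb the twist by $F$. Fix any smooth Hermitian metric on $F$, with curvature $\Theta_F$; then the curvature of $\mc{O}_{P(E)}(m)\otimes F$ is $m(-\Psi+\omega_{FS})+\Theta_F$. Since $P(E)$ is compact, $\Theta_F$ is bounded, so for $m$ large the term $m(-\Psi+\omega_{FS})$ dominates and the curvature of $\mc{O}_{P(E)}(m)\otimes F$ remains nondegenerate with exactly $n$ negative eigenvalues at every point. Thus there is an $m_0>0$ such that for all $m\geq m_0$ the line bundle $\mc{O}_{P(E)}(m)\otimes F$ carries a metric whose curvature has constant signature with precisely $n$ negative eigenvalues.

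Finally I would invoke Andreotti--Grauert to turn this pointwise signature into vanishing. Passing to the total space of the dual of $\mc{O}_{P(E)}(m)\otimes F$ and removing the zero section, the norm function built from the metric provides an exhaustion whose Levi form inherits the same signature away from the zero section, so that in the appropriate range the complement of the zero section is $q$-convex; the Andreotti--Grauert finiteness/vanishing theorem \cite[Theorem 14]{AG}, applied to the $\mb{C}^*$-homogeneous pieces, then concentrates the cohomology in degree $n$ and forces $H^q(P(E),\mc{O}_{P(E)}(m)\otimes F)=0$ for $q<n$ and $m\geq m_0$. Equivalently, one may dualize by Serre duality to $H^{N-q}(P(E),\mc{O}_{P(E)}(-m)\otimes F^{-1}\otimes K_{P(E)})$, whose line bundle then has $n$ positive eigenvalues, and apply the standard $q$-positive vanishing. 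I expect this last step to be the main obstacle: the conclusion does not follow from Kodaira--Nakano, because $P(E)$ need not be K\"ahler, and instead requires constructing the correct $q$-convex exhaustion on the total space together with the $\mb{C}^*$-equivariant bookkeeping that extracts the $\mc{O}_{P(E)}(m)$-isotypic component of the cohomology.
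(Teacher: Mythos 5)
Your proposal is correct and follows essentially the same route as the paper: both rest on the observation that the curvature $-\Psi+\omega_{FS}$ of $\mc{O}_{P(E)}(1)$ is block-diagonal with exactly $n$ negative and $r-1$ positive eigenvalues, and then invoke the Andreotti--Grauert vanishing theorem \cite[Theorem 14]{AG}. The paper simply runs your ``equivalent dualization'': it applies Serre duality first and uses that $\mc{O}_{P(E)}(-1)$ is $(r-1)$-positive, hence $(r-1)$-ample (citing \cite[Proposition 2.1]{DPS} for the passage from $q$-positive to $q$-ample rather than reconstructing the $q$-convex exhaustion and $\mb{C}^*$-isotypic decomposition as you sketch), applied to the coherent sheaf $F^*\otimes K_{P(E)}$.
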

\begin{proof}
By Serre duality, one has 	
\begin{align}\label{2.4}
H^q(P(E), \mc{O}_{P(E)}(m)\otimes F)\cong	H^{n+r-1-q}(P(E), \mc{O}_{P(E)}(-m)\otimes F^*\otimes K_{P(E)}).
\end{align}
Since the curvature form of $\mc{O}_{P(E)}(-1)$ is
\begin{align}
\sqrt{-1}\b{\p}\p\log G=\Psi-\omega_{FS},	
\end{align}
which has $n$ positive eigenvalues at each point of $P(E)$, so $\mc{O}_{P(E)}(-1)$ is $(r-1)$-positive. By  Andreotti-Grauert theorem \cite[Theorem 14]{AG} (see also \cite[Proposition 2.1]{DPS}), $\mc{O}_{P(E)}(-1)$ is $(r-1)$-ample, that is,  for any coherent sheaf $\mc{F}$ on $P(E)$ there exists a positive integer $m_0$  such that
$$H^i(P(E), \mc{F}\otimes \mc{O}_{P(E)}(-m))=0,\quad i>r-1,\quad m\geq m_0.$$ By taking $\mc{F}=F^*\otimes K_{P(E)}$ one gets
\begin{align}
H^{n+r-1-q}(P(E), \mc{O}_{P(E)}(-m)\otimes F^*\otimes K_{P(E)})=0,\quad q<n,\quad m\geq m_0.	
\end{align}
Combining with (\ref{2.4}) completes the proof.
\end{proof}
Our main result in this section is the following.
\begin{thm}\label{prop5}
	If $E$ admits  a strongly pseudoconvex complex Finsler metric with positive Kobayashi curvature, then $E$ is ample.
\end{thm}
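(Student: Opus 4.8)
The plan is to prove ampleness of $E$ not by constructing a metric on $E^*$ directly — as the introduction explains, the lack of a workable duality between a Finsler structure $G$ and its dual $G^*$ makes that route intractable — but by establishing the cohomological vanishing that characterizes positivity of the hyperplane bundle $\mc{O}_{P(E^*)}(1)$, and then transporting that vanishing off $P(E^*)$ and onto $P(E)$, where the positive Kobayashi curvature hypothesis can actually be used.

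First I would invoke Lemma \ref{lemma2} to guarantee that $P(E^*)$ is a projective manifold, so that the Cartan--Serre--Grothendieck criterion packaged in Lemma \ref{lemma3} is available with base $P(E^*)$ and $L=\mc{O}_{P(E^*)}(1)$. Since $E$ is ample precisely when $\mc{O}_{P(E^*)}(1)$ is positive, it then suffices to show that for every holomorphic line bundle $F$ on $P(E^*)$ there is an integer $m_0$ with $H^i(P(E^*), \mc{O}_{P(E^*)}(m)\otimes F)=0$ for all $i>0$ and $m\geq m_0$.

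Next, using the splitting $\text{Pic}(P(E^*))\simeq\text{Pic}(M)\oplus\mb{Z}\,\mc{O}_{P(E^*)}(1)$, I write $F=p_1^*F_1\otimes\mc{O}_{P(E^*)}(a)$ for a line bundle $F_1$ on $M$ and an integer $a$. For $m\geq -a$, the projection-formula identification of the pushforward of $\mc{O}_{P(E^*)}(m+a)$ with $S^{m+a}E$ (\cite[Theorem 5.1]{BPV}) gives $H^i(P(E^*), \mc{O}_{P(E^*)}(m)\otimes F)\cong H^i(M, S^{m+a}E\otimes F_1)$; Serre duality on the compact manifold $M$ converts this to $H^{n-i}(M, S^{m+a}E^*\otimes F_1^*\otimes K_M)$; and the same projection-formula identification on $P(E)$ rewrites it as $H^{n-i}(P(E), \mc{O}_{P(E)}(m+a)\otimes p^*(F_1^*\otimes K_M))$. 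This chain is the crux of the argument: it replaces the missing metric duality between $E$ and $E^*$ by a purely cohomological duality, moving the entire question onto $P(E)$.

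Finally, I would apply Lemma \ref{lemma4} with the fixed line bundle $F=p^*(F_1^*\otimes K_M)$ on $P(E)$: positivity of the Kobayashi curvature forces $\mc{O}_{P(E)}(-1)$ to have $n$ positive eigenvalues, hence to be $(r-1)$-ample, which yields $H^{q}(P(E), \mc{O}_{P(E)}(m+a)\otimes F)=0$ for $q<n$ once $m+a$ is large. Taking $q=n-i$ with $i>0$ gives exactly the vanishing I need for all $m$ beyond some threshold, and combining this with the constraint $m\geq -a$ produces the desired $m_0$. By Lemma \ref{lemma3}, $\mc{O}_{P(E^*)}(1)$ is positive and $E$ is ample. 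Since the lemmas are granted, the only genuine subtlety remaining in this step is bookkeeping the index and degree ranges through the three isomorphisms so that the hypothesis $q<n$ of Lemma \ref{lemma4} lines up precisely with the range $i>0$ after dualizing; the real geometric content — the passage from positive Kobayashi curvature to $(r-1)$-ampleness of $\mc{O}_{P(E)}(-1)$ — is concentrated entirely in Lemma \ref{lemma4}, which is where I would expect the main difficulty to lie if it had to be proved from scratch.
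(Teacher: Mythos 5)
Your proposal is correct and follows essentially the same route as the paper's own proof: reduce to the vanishing criterion of Lemma \ref{lemma3} on the projective manifold $P(E^*)$ (via Lemma \ref{lemma2}), use the Picard group splitting and the chain of isomorphisms through $S^{m+a}E$ on $M$, Serre duality, and the direct image identification on $P(E)$, and conclude by the Andreotti--Grauert vanishing of Lemma \ref{lemma4}. Your closing remark correctly locates the geometric content in Lemma \ref{lemma4}; nothing is missing.
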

\begin{proof}
	From Lemma \ref{lemma2}, \ref{lemma3}, it suffices  to show for any holomorphic line bundle $F$ on $P(E^*)$, there exists a positive integer $m_0$ such that
	\begin{align}
	H^i(P(E^*), \mc{O}_{P(E^*)}(m)\otimes F)=0,\quad i>0,\quad m\geq m_0.	
	\end{align}
Denote $p:P(E)\to M$ and $p_1:P(E^*)\to M$. Since the Picard group of $P(E^*)$  has the following simple structure,
\begin{align}
	\text{Pic}(P(E^*))\simeq\text{Pic}(M)\oplus \mb{Z}\mc{O}_{P(E^*)}(1),
\end{align}
so there exist a line bundle $F_1$ on $M$ and an integer $a\in\mb{Z}$ such that
\begin{align}
	F= p_1^*F_1\otimes \mc{O}_{P(E^*)}(a).
\end{align}
It follows that
\begin{align}\label{3.3}
 	\mc{O}_{P(E^*)}(m)\otimes F=\mc{O}_{P(E^*)}(m+a)\otimes p_1^*F_1.
 \end{align}
For any integer $m\geq -a$, one has
\begin{aligns}\label{3.2}
	&\quad  H^i(P(E^*), \mc{O}_{P(E^*)}(m)\otimes F)\\
	&\cong H^i(P(E^*), \mc{O}_{P(E^*)}(m+a)\otimes p_1^*F_1)\quad \text{by (\ref{3.3})}\\
	&\cong H^i(M, S^{m+a}E\otimes F_1)\quad \text{by \cite[Theorem 5.1]{BPV}}\\
	&\cong H^{n-i}(M, S^{m+a}E^*\otimes F_1^*\otimes K_M) \quad \text{by Serre duality, see e.g. \cite[Corollary 2.11]{Shiffman}}\\
	&\cong H^{n-i}(P(E), \mc{O}_{P(E)}(m+a)\otimes p^*(F_1^*\otimes K_M)) \quad \text{by \cite[Theorem 5.1]{BPV}}\\
	&=0,\quad i>0,\quad m\geq m_0\quad \text{by Lemma \ref{lemma4}}.
\end{aligns}
Thus, $E$ is ample.
\end{proof}

\section{Applications}\label{sec3}

In this section, we will give some applications of Theorem \ref{thm0}. From Theorem \ref{thm0}, it is possible that many related results under the assumption of ampleness could be valid by changing the assumption of ampleness into the existence of a strongly pseudoconvex  complex Finsler metric with positive Kobayashi curvature. 
Among them we only mention here
the following two famous theorems due to S. Mori \cite[Theorem 8]{Mori}, W. Fulton and R. Lazarsfeld \cite[Theorem I]{FL}. 

In \cite{Mori}, Mori proved the following theorem, which solves Hartshorne's conjecture.
\begin{thm}[{\cite[Theorem 8]{Mori}}]
Every irreducible $n$-dimensional non-singular projective variety with ample tangent bundle defined over an algebraically closed field $k$ of characteristic $\geq 0$ is isomorphic to the projective space $\mb{P}^n_k$.	
\end{thm}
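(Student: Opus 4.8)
The plan is to exploit the fact that ampleness of $T_X$ forces $X$ to be Fano and then to manufacture a large, well-behaved family of rational curves of minimal degree, ultimately realizing $X$ as $\mb{P}^n$ by recording the tangent directions of such curves through a fixed point. First observe that the anticanonical bundle $-K_X=\det T_X$ is ample, so $X$ is a Fano variety. The heart of the matter is the \emph{existence} of rational curves, for which there is no direct transcendental (characteristic $0$) argument; following Mori one reduces to characteristic $p>0$ and drives the construction with the Frobenius morphism.

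To produce rational curves I would proceed as follows. Fix a smooth projective curve $C$ of genus $g$, a point $c\in C$, and a nonconstant $f:C\to X$. Deformation theory bounds the space of maps that keep $c$ pinned to $f(c)$ from below by
\begin{align*}
\dim_{[f]}\mathrm{Mor}\bigl(C,X;\,c\mapsto f(c)\bigr)\ \ge\ \chi(C,f^*T_X)-n\ =\ -K_X\cdot f_*[C]+n(1-g)-n.
\end{align*}
Over a field of characteristic $p>0$, precomposition with the $e$-th Frobenius $F^e:C\to C$ leaves $g$ unchanged but multiplies $-K_X\cdot f_*[C]$ by $p^e$, so this lower bound, namely $p^e\bigl(-K_X\cdot f_*[C]\bigr)-ng$, is eventually positive. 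A positive-dimensional family of maps through the fixed point $f(c)$ cannot be isotrivial, and bend-and-break forces the family to degenerate so that a rational curve splits off through that point; a further application bounds the degree of a \emph{minimal} such curve, yielding through every point of $X$ a rational curve $\ell$ with $-K_X\cdot\ell\le n+1$. Since the theorem allows characteristic $0$, I would then spread $X$ out over a finitely generated $\mb{Z}$-algebra and apply the above to the reductions modulo infinitely many primes; the bound $n+1$ is uniform, so a boundedness/limit argument over the base transports the existence of a minimal rational curve $\ell\cong\mb{P}^1$ through each point back to the original field.

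Finally I would analyze such a minimal curve $\ell$. Because $T_X$ is ample, $T_X|_\ell=\bigoplus_{i=1}^{n}\mc{O}_{\mb{P}^1}(a_i)$ with every $a_i\ge 1$, while the inclusion $T_{\mb{P}^1}=\mc{O}_{\mb{P}^1}(2)\hookrightarrow T_X|_\ell$ forces some $a_i\ge 2$; hence $-K_X\cdot\ell=\sum a_i\ge n+1$. Combined with $-K_X\cdot\ell\le n+1$ this pins down $T_X|_\ell=\mc{O}_{\mb{P}^1}(2)\oplus\mc{O}_{\mb{P}^1}(1)^{n-1}$, so $\ell$ is an honest line with normal bundle $N_{\ell/X}=\mc{O}_{\mb{P}^1}(1)^{n-1}$. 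I would then study the family $V_{x_0}$ of minimal rational curves through a fixed point $x_0$: the splitting type makes the fixing-a-point deformation space smooth of dimension $n-1$, the lines are unbreakable and sweep out $X$, and the tangent-direction map $V_{x_0}\to\mb{P}(T_{x_0}X)=\mb{P}^{n-1}$ is an isomorphism. Assembling these over a general point produces a finite birational morphism $X\to\mb{P}^n$, which by Zariski's main theorem is an isomorphism since $X$ is smooth, hence normal; thus $X\cong\mb{P}^n$.

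The main obstacle is the very first existence step. There is no way to manufacture a rational curve directly in characteristic $0$, so the entire weight of the theorem rests on the characteristic-$p$ bend-and-break mechanism: controlling the degeneration of the Frobenius-boosted family of maps, and guaranteeing that a genuine rational curve (rather than a multiple component or a fiber counted with multiplicity) actually splits off through the fixed point, is the delicate technical core.
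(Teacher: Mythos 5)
The paper does not prove this statement at all---it is quoted as a black box from Mori's 1979 paper \cite{Mori} and immediately used to deduce Corollary \ref{cor1}---so the relevant comparison is with Mori's own proof, and your outline is an accurate summary of precisely that argument: characteristic-$p$ reduction with Frobenius amplification of the deformation-theoretic bound, bend-and-break to produce a rational curve of degree at most $n+1$ through every point, spreading out over a finitely generated $\mb{Z}$-algebra to handle characteristic $0$, the forced splitting $T_X|_\ell\cong\mc{O}_{\mb{P}^1}(2)\oplus\mc{O}_{\mb{P}^1}(1)^{\oplus(n-1)}$ from ampleness plus minimality, and the study of the family of lines through a fixed point. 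The only soft spot is your compressed final step: Mori obtains $X\cong\mb{P}^n$ through a delicate analysis of the universal family of lines through $x_0$ and its evaluation map (showing it is simultaneously a $\mb{P}^1$-bundle over $\mb{P}^{n-1}$ and a blow-down to $X$), rather than by directly exhibiting a finite birational morphism $X\to\mb{P}^n$ and invoking Zariski's main theorem, but in outline your route is essentially the same as the cited proof.
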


In the case $k=\mb{C}$, by Theorem \ref{thm0}, we obtain
\begin{cor}\label{cor1}
If $(M,G)$ is a strongly pseudoconvex complex Finsler manifold with positive Kobayashi curvature, then $M$ is biholomorphic to $\mb{P}^n$.
\end{cor}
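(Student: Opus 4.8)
The plan is to reduce the statement to Mori's theorem \cite[Theorem 8]{Mori} by way of our main Theorem \ref{thm0}. By definition, saying that $(M,G)$ is a strongly pseudoconvex complex Finsler manifold with positive Kobayashi curvature means precisely that the holomorphic tangent bundle $E=TM$ carries a strongly pseudoconvex complex Finsler metric $G$ with positive Kobayashi curvature. Applying Theorem \ref{thm0} to the bundle $E=TM$ thus yields immediately that $TM$ is ample.

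Before invoking Mori's theorem, which is stated in the projective category, I would first check that $M$ is projective. For this I would reuse the computation in the proof of Lemma \ref{lemma2}: with $E=TM$ and $r=n$, the positivity of the Kobayashi curvature forces $\det E=K_M^{-1}$ to be a positive line bundle, because $c_1(\det E)$ is represented by the positive $(1,1)$-form obtained by fiber-integrating $\Psi\wedge\omega_{FS}^{r-1}$ along $P(E)/M$. Hence $K_M^{-1}$ is ample, $M$ is Fano, and by the Kodaira embedding theorem $M$ is projective.

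With $M$ now projective and $TM$ ample, I would apply Mori's theorem \cite[Theorem 8]{Mori} in the case $k=\mb{C}$ to conclude that $M$ is isomorphic, hence biholomorphic, to $\mb{P}^n$, which is the desired assertion.

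The essential point is not a genuine obstacle but rather the passage from the analytic hypothesis to the algebraic conclusion: Mori's theorem applies to nonsingular projective varieties, so the crucial bridge is the projectivity of $M$. This is exactly what is furnished by the positivity of the anticanonical bundle extracted from the positive Kobayashi curvature, after which Theorem \ref{thm0} and Mori's theorem do the remaining work.
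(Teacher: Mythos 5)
Your proposal is correct and follows essentially the same route as the paper: apply Theorem \ref{thm0} to $E=TM$ to get ampleness of the tangent bundle and then invoke Mori's theorem over $k=\mb{C}$. The only difference is that you spell out the projectivity of $M$ via the positivity of $K_M^{-1}$ from the fiber-integration computation of Lemma \ref{lemma2}, a step the paper leaves implicit; this is a welcome clarification rather than a deviation.
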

Note that when the above Finsler metric $G$ is induced from a K\"ahler metric on $M$, Y.-T. Siu and S.-T Yau in \cite{SY} proved this result in a direct geometric way.

Let $P\in  \mb{Q}[c_1, \ldots, c_r]$ be a weighted homogeneous polynomail of degree $n$, the variable $c_i$ being assigned weight $i$. We say that $P$ is numerically positive for ample vector bundles (resp. complex Finsler vector  bundles of positive Kobyashi curvature) if for every projective variety $M$ of dimension $n$, and every ample vector bundle $E$ (resp. complex Finsler  vector bundles $E$ of positive Kobyashi curvature) over $M$, the Chern number
\begin{align}
\int_M P(c_1(E),\ldots, c_r(E))>0,	
\end{align}
where $r=\text{rank}E$. Denote by $\Lambda(n,r)$ the set of all partitions of $n$ by non-negative integers $<r$. Thus an element $\lambda \in  \Lambda(n, r)$ is specified by a sequence
\begin{align}
r\geq \lambda_1\geq \lambda_2\geq\cdots\geq \lambda_n\geq 0\quad \text{with}\quad \sum\lambda_i=n.	
\end{align}
Each $\lambda\in\Lambda(n,r)$ gives rise to a Schur polynomial $P_{\lambda}\in \mb{Q}[c_l, \ldots, c_r]$ of degree $n$, defined as the $n\times n$ determinant
\begin{align}
P_{\lambda}=\begin{vmatrix}
c_{\lambda_1} & c_{\lambda_1+1} &\cdots & c_{\lambda_1+n-1} \\
c_{\lambda_2-1}& c_{\lambda_2} &\cdots & c_{\lambda_2+n-2}\\
\vdots &\vdots &\ddots &\vdots \\
c_{\lambda_n-n+1} & c_{\lambda_n-n+2} & \cdots & c_{\lambda_n}
\end{vmatrix},
\end{align}
where by convention $c_0=1$ and $c_i=0$ if $i\not\in[0,r]$. The Schur polynomials $P_{\lambda} (\lambda\in \Lambda(n,r))$ form a basis for the $\mb{Q}$-vector space of weighted homogeneous polynomials of degree $n$ in $r$ variables. Given such a polynomail $P$, write
\begin{align}
P=\sum_{\lambda\in \Lambda(n,r)}a_\lambda(P)P_{\lambda}	 \quad (a_{\lambda}(P)\in \mb{Q})
\end{align}
In \cite{FL}, W. Fulton and R. Lazarsfeld obtained the following theorem, which generalized the result of S. Bloch and D. Gieseker \cite{BG}.
\begin{thm}[{\cite[Theorem I]{FL}}]
	The polynomial $P$ is numerically positive for ample vector bundles if and only if $P$ is non-zero and
	\begin{align*}
	a_{\lambda}(P)\geq 0 \quad \text{for all}\quad \lambda\in \Lambda(n,r).	
	\end{align*}
\end{thm}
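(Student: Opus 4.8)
The plan is to reduce the theorem to a single geometric input and then dispatch the two implications separately. The crux, which I would isolate as a standalone claim, is the \emph{positivity of Schur polynomials}: for every smooth projective variety $M$ with $\dim M=n$ and every ample $E$ of rank $r$ on $M$, one has $\int_M P_\lambda(c(E))>0$ for each $\lambda\in\Lambda(n,r)$. Granting this, the ``if'' direction is immediate. Suppose $P\neq 0$ and $a_\lambda(P)\geq 0$ for all $\lambda$. Then for any ample $E$ on any projective $n$-fold $M$, linearity of $P=\sum_\lambda a_\lambda(P)P_\lambda$ gives $\int_M P(c(E))=\sum_\lambda a_\lambda(P)\int_M P_\lambda(c(E))$, a sum of nonnegative terms (a nonnegative coefficient times a strictly positive Schur number). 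Since $P\neq 0$ there is at least one $\lambda_0$ with $a_{\lambda_0}(P)>0$, whose term is strictly positive, so the whole sum is $>0$ for every such $M$ and $E$; hence $P$ is numerically positive.

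For the converse, assume $P$ is numerically positive. Then $P\neq 0$ is automatic, since $P=0$ would give $\int_M P(c(E))=0$. To force $a_\lambda(P)\geq 0$ I would construct explicit ample test bundles detecting individual coefficients. The most transparent family lives on products of projective spaces: take $M=\mb{P}^{m_1}\times\cdots\times\mb{P}^{m_k}$ with $\sum_i m_i=n$ and $E=\bigoplus_{j=1}^r L_j$ a direct sum of ample line bundles (which is ample). Writing $\xi_j=c_1(L_j)$ for the Chern roots, one has $P_\mu(c(E))=s_{\tilde\mu}(\xi_1,\dots,\xi_r)$, the Schur function in the roots indexed by the conjugate partition $\tilde\mu$, so that $\int_M P(c(E))=\sum_\mu a_\mu(P)\int_M s_{\tilde\mu}(\xi_1,\dots,\xi_r)$. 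Since the multidegrees of the $L_j$ along the factors may be chosen freely among positive integers, the Schur numbers $\int_M s_{\tilde\mu}(\xi)$ are controllable polynomials in these degrees. By selecting the degrees with a hierarchy of scales and running an induction on the dominance order of partitions — peeling off the contribution of a maximal partition first and descending — one arranges that a single prescribed $\lambda$ dominates the sum, so that numerical positivity of $P$ forces $a_\lambda(P)\geq 0$. This step is purely combinatorial, resting on the Jacobi--Trudi and Littlewood--Richardson identities.

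The main obstacle is the positivity of Schur polynomials itself, the deep geometric input and the part on which I would spend the most effort. My approach would be to realize $P_\lambda(c(E))$ as the class of a degeneracy locus: by the Kempf--Laksov (Giambelli--Thom--Porteous) formula, $P_\lambda(c(E))$ equals the fundamental class $[D_\lambda(\varphi)]$ of the degeneracy locus of a suitably general bundle morphism $\varphi$ built from $E$ and a fixed flag. Since $|\lambda|=n=\dim M$, the expected dimension of $D_\lambda(\varphi)$ is zero, so $\int_M P_\lambda(c(E))$ equals the number of points of $D_\lambda(\varphi)$ counted with multiplicity. Ampleness enters twice: a high symmetric power of $E$ is globally generated, which supplies enough general morphisms $\varphi$ and, through the classifying map $M\to G(e,N)$ together with a Kleiman-type transversality argument for Schubert loci, guarantees that $D_\lambda(\varphi)$ is reduced of the expected dimension; and ampleness moreover forces this locus to be \emph{non-empty}, which is exactly what upgrades the nef estimate $\geq 0$ (attained, for instance, by tautological quotients on Grassmannians) to strict positivity. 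Securing non-emptiness and transversality simultaneously for ample — rather than merely globally generated — bundles is the delicate point, and I expect the genuine work to concentrate there; once it is in place, effectivity of the representing zero-cycle yields $\int_M P_\lambda(c(E))>0$ and the theorem follows.
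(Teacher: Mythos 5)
The paper does not prove this statement at all: it is Theorem~I of Fulton--Lazarsfeld, quoted as a black box in order to deduce Corollary~\ref{Cor2}. So your proposal has to be measured against the original argument in \cite{FL}, and there it has a genuine gap in the converse direction. Split bundles $E=\bigoplus_j L_j$ on products of projective spaces generate too small a cone of Chern-number vectors to force $a_\lambda(P)\geq 0$ for every $\lambda$. Concretely, take $n=r=2$, so $\Lambda(2,2)=\{(2,0),(1,1)\}$ with $P_{(2,0)}=c_2$ and $P_{(1,1)}=c_1^2-c_2$. For $E=L_1\oplus L_2$ with Chern roots $\xi_1,\xi_2$ one has $P_{(1,1)}(c(E))-P_{(2,0)}(c(E))=\xi_1^2+\xi_2^2$, and on $\mb{P}^{m_1}\times\cdots\times\mb{P}^{m_k}$ every monomial in ample classes has nonnegative (here positive) degree, so $\int_M P_{(1,1)}>\int_M P_{(2,0)}$ for \emph{every} member of your test family. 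Hence $P=P_{(1,1)}-\tfrac12 P_{(2,0)}$ is strictly positive on all your test pairs even though $a_{(2,0)}(P)<0$: no choice of multidegrees, hierarchy of scales, or induction on dominance order can concentrate the Schur numbers on the single partition $(2,0)$ within this family. Fulton and Lazarsfeld instead test against the universal quotient bundle $Q$ restricted to $n$-dimensional Schubert varieties $X\subset G(k,N)$, where Giambelli plus Schubert duality give $\int_X P_\mu(c(Q))=\delta_{\mu\lambda}$ exactly; the remaining work is to trade the merely nef $Q$ and the singular $X$ for honestly ample bundles on smooth varieties by a perturbation and limiting argument.

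Your plan for the key positivity $\int_M P_\lambda(c(E))>0$ is in the right spirit but also not yet a proof. The Kempf--Laksov degeneracy-locus representation gives effectivity, hence only $\int_M P_\lambda(c(E))\geq 0$, and only when enough sections exist; the step you flag as delicate --- that ampleness forces $D_\lambda(\varphi)$ to be non-empty and transverse --- is precisely the unproved content, and non-emptiness of degeneracy loci for ample bundles is itself a separate hard theorem that cannot be assumed here. The actual route in \cite{FL} is: first prove $\int_M P_\lambda(c(E))\geq 0$ for nef $E$ by the Bloch--Gieseker covering trick (pass to a branched cover on which a twist of $E$ by a fractional power of an ample line bundle is still nef and eventually globally generated, where effectivity applies); then, for ample $E$, write $f^*E=E'\otimes A$ on a cover with $E'$ nef and $A$ ample, and expand $P_\lambda(c(E'\otimes A))$ as a nonnegative combination of mixed terms $P_\mu(c(E'))\,c_1(A)^{n-|\mu|}$, among which the $\mu=\emptyset$ term contributes a positive multiple of $\int c_1(A)^n>0$. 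Some such mechanism converting nonnegativity into strict positivity has to be supplied; without it, and without a correct test family for the converse, the proposal does not close.
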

Combining with Theorem \ref{thm0}, we obtain
\begin{cor}\label{Cor2}
All Schur polynomials are numerically positive for complex Finsler bundles with positive Kobayashi curvature.
\end{cor}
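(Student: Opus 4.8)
The plan is to obtain this as a formal consequence of Theorem \ref{thm0} combined with the Fulton-Lazarsfeld theorem recalled above, exploiting the fact that each Schur polynomial is itself a basis element. The strategy is to transport the numerical positivity known for ample vector bundles across the ampleness implication supplied by Theorem \ref{thm0}.

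First I would fix a partition $\lambda\in\Lambda(n,r)$ and verify the algebraic hypothesis of the Fulton-Lazarsfeld theorem for the Schur polynomial $P_\lambda$. Since the $P_\mu$ with $\mu\in\Lambda(n,r)$ form a basis of the space of weighted homogeneous polynomials of degree $n$ in $c_1,\ldots,c_r$, the expansion of $P_\lambda$ in this basis is the trivial one: $a_\lambda(P_\lambda)=1$ and $a_\mu(P_\lambda)=0$ for $\mu\neq\lambda$. In particular $P_\lambda$ is non-zero and all of its Schur coefficients satisfy $a_\mu(P_\lambda)\geq 0$. The Fulton-Lazarsfeld theorem then yields immediately that $P_\lambda$ is numerically positive for ample vector bundles.

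Next I would invoke the Finsler hypothesis. Let $E$ be a complex Finsler vector bundle of positive Kobayashi curvature, of rank $r$, over a projective variety $M$ of dimension $n$. By Theorem \ref{thm0}, $E$ is ample. Applying to this ample bundle the numerical positivity established in the previous step gives $\int_M P_\lambda(c_1(E),\ldots,c_r(E))>0$; since $\lambda\in\Lambda(n,r)$ was arbitrary, every Schur polynomial is numerically positive for complex Finsler bundles of positive Kobayashi curvature.

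I do not anticipate any genuine obstacle: all of the analytic and geometric content is already packaged inside Theorem \ref{thm0}, so once ampleness is in hand the conclusion is purely formal. The only point requiring a moment's care is confirming that each Schur polynomial meets the Fulton-Lazarsfeld sign condition, which holds automatically because the Schur polynomials are precisely the chosen basis and hence have Kronecker-delta coordinates with respect to themselves.
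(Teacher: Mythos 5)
Your proposal is correct and follows exactly the paper's route: the paper likewise deduces the corollary by combining Theorem \ref{thm0} (positive Kobayashi curvature implies ample) with the Fulton--Lazarsfeld theorem, the only implicit detail being the observation you make explicit that each $P_\lambda$ has nonnegative (Kronecker-delta) coefficients in the Schur basis. No gaps.
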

 We should stress that how to prove these two famous theorems in a purely differential geometrical method is still widely open.


\begin{thebibliography}{99}

%\bibitem{Aikou} T. Aikou, {\it Finsler Geometry on complex vector bundles}, Riemann-Finsler Geometry, MSRI Pulblications {\bf 50} (2004), 83-105.
\bibitem{AG} A. Andreotti, H. Grauert, {\it Th\'eor\`eme de finitude pour la cohomologie des espaces complexes},
Bull. Soc. Math. France {\bf 90} (1962), 193-259.

%\bibitem{AP} M. Abate, G. Patrizio, Finsler Metrics- A Global Approach, LNM 1591, Springer-Verlag, Berlin Heidelberg, 1994.

\bibitem{Aikou} T. Aikou, {\it Finsler Geometry on complex vector bundles}, Riemann-Finsler Geometry, MSRI Pulblications {\bf 50} (2004), 83-105.

\bibitem{BPV} W. Barth, C. Peters, A. Van de Ven,  Compact complex surfaces. Ergebnisse der Mathematik und ihrer Grenzgebiete (3) [Results in Mathematics and Related Areas (3)], 4. Springer-Verlag, Berlin, 1984.

\bibitem{Bern2} B. Berndtsson, {\it Curvature of vector bundles associated to holomorphic fibrations}, Ann. Math. {\bf 169} (2009), 531-560.

\bibitem{BG} S. Bloch, D. Gieseker, {\it The positivity of the Chern classes of an ample vector bundle}, Invent. Math. {\bf 12} (1971), 112-117.

\bibitem{Cao-Wong} J. Cao, P.-M. Wong, {\it Finsler geometry of projectivized vector bundles}, J. Math. Kyoto Univ. {\bf 43} (2003), No.2, 369-410.

\bibitem{CF} F. Campana, H. Flenner, {\it A Characterization of Ample Vector-Bundles on a Curve}, Mathematische Annalen {\bf 287} (1990), No. 4, 571-575.

\bibitem{DPS} J.-P. Demailly, T. Peternell, M. Schneider, {\it Holomorphic line bundles with partially vanishing
cohomology} Proceedings of the Hirzebruch 65 Conference on Algebraic Geometry (RamatGan, 1993), 165-198, Israel Math. Conf. Proc., 9, BarIlan Univ 1996.

\bibitem{Dem} J.-P. Demailly, {\it Pseudoconvex-concave duality and regularization of currents}, Several complex variables (Berkeley, CA, 1995-1996), 233-271, Math. Sci. Res. Inst. Publ., 37, Cambridge Univ. Press, Cambridge, 1999.

%\bibitem{Dem1} J.-P. Demailly, \href{https://www-fourier.ujf-grenoble.fr/~demailly/manuscripts/agbook.pdf}{Complex analytic and differential geometry},  2012.

\bibitem{FLW} H. Feng, K. Liu, X. Wan, {\it Chern forms of holomorphic Finsler vector bundles and some applications}, Inter. J. Math. {\bf 27} (2016), No. 4, 1650030.

%\bibitem{FK} K. Fritzsche, {\it $q$-konvexe Restmengen in kompakten komplexen Mannigfaltigkeiten}, (German) Math. Ann. {\bf 221} (1976), no. 3, 251-273.

%\bibitem{FK1} K. Fritzsche, {\it Pseudoconvexity properties of complements of analytic subvarieties}, Math. Ann. {\bf 230} (1977), no. 2, 107-122.



\bibitem{FL} W. Fulton,  R. Lazarsfeld, {\it Positive polynomials for ample vector bundles}, Annals of Mathematics {\bf 118} (1983), No. 1, 35-60.

%\bibitem{Fulton} W. Fulton, {\it On the topology of algebraic varieties}, Algebraic geometry, Bowdoin, 1985 (Brunswick, Maine, 1985), 15-46, Proc. Sympos. Pure Math., 46, Part 1, Amer. Math. Soc., Providence, RI, 1987.
 
\bibitem{Gri} P. Griffiths, {\it Hermitian differential geometry, Chern classes and positive vector bundles,} Global Analysis, papers in honor of K. Kodaira, Princeton Univ. Press, Princeton (1969), 181-251.

%\bibitem{GM} M. Goresky, R. MacPherson, {\it Stratified Morse theory}, Singularities, Part 1 (Arcata, Calif., 1981), 517-533, Proc. Sympos. Pure Math., 40, Amer. Math. Soc., Providence, R.I., 1983.

\bibitem{Hart} R. Hartshorne, {\it Ample vector bundles}, Inst. Hautes \'Etudes Sci. Publ. Math.  (1966), No. 29,  63-94.

%\bibitem{Hart1} R. Hartshorne, Ample subvarieties of algebraic varieties, Notes written in collaboration with C. Musili. Lecture Notes in Mathematics, Vol. 156 Springer-Verlag, Berlin-New York 1970 xiv+256 pp.

\bibitem{Ko1} S. Kobayashi, {\it Negative vector bundles and complex Finsler structures}, Nagoya Math. J. Vol. {\bf 57} (1975), 153-166.

%\bibitem{Ko2} S. Kobayashi, T. Ochiai, {\it On complex manifolds with positive tangent bundles}, J. Math. Soc. Japan., Vol {\bf 22} (1970), No. 4, 499-525.

%\bibitem{Laz} R. Lazarsfeld,  Positivity in algebraic geometry. II. Positivity for vector bundles, and multiplier ideals. Ergebnisse der Mathematik und ihrer Grenzgebiete. 3. Folge. A Series of Modern Surveys in Mathematics [Results in Mathematics and Related Areas. 3rd Series. A Series of Modern Surveys in Mathematics], 49. Springer-Verlag, Berlin, 2004.

 \bibitem{Liu1} K. Liu, X. Sun, X. Yang, {\it Positivity and vanishing theorems for ample vector bundles},  J. Algebraic Geom. {\bf 22} (2013), No. 2, 303-331.

\bibitem{MT} C. Mourougane, S. Takayama, {\it Hodge metrics and positivity of direct images}, J. Reine Angew. Math. {\bf 606} (2007), 167-178.

%\bibitem{Mat} S. Matsumura, {\it Weak Lefschetz theorems and the topology of zero loci of ample vector bundles}, Comm. Anal. Geom. {\bf 22} (2014), no. 4, 595-616.

 \bibitem{Mori}  S. Mori, {\it Projective manifolds with ample tangent bundles}, Ann. of Math. (2) {\bf 110} (1979), no. 3, 593-606.

 %\bibitem{Ohsawa} T. Ohsawa,  $L^2$ approaches in several complex variables, Development of Oka-Cartan theory by L2 estimates for the $\b{\p}$ operator. Springer Monographs in Mathematics. Springer, Tokyo, 2015.

\bibitem{Sommese} A.-J. Sommese, {\it Concavity theorems}, Math. Ann. {\bf 235} (1978), 37-53.

%\bibitem{Som} A.-J. Sommese, {\it Submanifolds of Abelian varieties}, Math. Ann. {\bf 233} (3) (1978), 229-256.


\bibitem{Shiffman} B. Shiffman, A. J. Sommese,  Vanishing theorems on complex manifolds. Progress in Mathematics, 56. Birkh\"auser Boston, Inc., Boston, MA, 1985.

%\bibitem{Sch} M. Schneider, {\it \"Uber eine Vermutung von Hartshorne}, Math. Ann. {\bf 201} (1973), 221-229.

\bibitem{SY} Y.-T. Siu, S.-T. Yau,  {\it Compact K\"ahler manifolds of positive bisectional curvature}, Invent. Math. {\bf 59} (1980), no. 2, 189-204.

\bibitem{Umemura} H. Umemura, {\it Some results in the theory of vector bundles}, Nagoya Math. J. {\bf 52} (1973), 97-128.

\bibitem{Wan1} X. Wan, {\it Positivity preserving along a flow over projective bundle}, \href{https://arxiv.org/pdf/1801.09886.pdf}{arXiv:1801.09886}, 2018.

\end{thebibliography}
\end{document}